\numberwithin{equation}{section}
\newcommand{\dsum}{\displaystyle\sum}
\newcommand{\set}[1]{\left\{ #1 \right\}}
\newcommand{\Set}[2]{\left\{ #1 \mathrel{} \middle| \mathrel{} #2 \right\}}
\newcommand{\abs}[1]{\left \lvert #1 \right \rvert}
\newcommand{\leg}[2]{\left(\frac{#1}{#2}\right)}
\newcommand{\floor}[1]{\left\lfloor#1\right\rfloor} 
\newcommand{\inv}[1]{#1^{-1}}
\newcommand{\dprod}{\displaystyle\prod}
\def\widebar{\accentset{{\cc@style\underline{\mskip10mu}}}}
\def\wideubar{\underaccent{{\cc@style\underline{\mskip10mu}}}}
\def\x{^{\times}}
\def\inv{^{-1}}
\def\1{\mathbbm{1}}
\def\jac{\mathrm{Jac}}
\def\GL{\mathrm{GL}}
\newcommand{\ceil}[1]{\left\lceil#1\right\rceil} 
\def\A{\mathbb{A}}
\def\C{\mathbb{C}}
\def\F{\mathbb{F}}
\def\P{\mathbb{P}}
\def\Q{\mathbb{Q}}
\def\R{\mathbb{R}}
\def\Z{\mathbb{Z}}
\def\mf{\mathfrak}
\def\mc{\mathcal}
\DeclareMathOperator{\End}{End}
\DeclareMathOperator{\gal}{Gal}
\DeclareMathOperator{\ord}{ord}
\DeclareMathOperator{\Tr}{Tr}
\DeclareMathOperator{\frob}{Frob}
\theoremstyle{definition}
\newtheorem{theoremeigo}{Theorem}[section]
\newaliascnt{definitioneigo}{theoremeigo}
\newtheorem{definitioneigo}[definitioneigo]{Definition}
\newaliascnt{lemmaeigo}{theoremeigo}
\newtheorem{lemmaeigo}[lemmaeigo]{Lemma}
\newaliascnt{propositioneigo}{theoremeigo}
\newtheorem{propositioneigo}[propositioneigo]{Proposition}
\newaliascnt{corollaryeigo}{theoremeigo}
\newaliascnt{exampleeigo}{theoremeigo}
\newtheorem{exampleeigo}[exampleeigo]{Example}
\newaliascnt{remarkeigo}{theoremeigo}
\newtheorem{remarkeigo}[remarkeigo]{Remark}
\newaliascnt{noteeigo}{theoremeigo}
\newaliascnt{problemeigo}{theoremeigo}
\newaliascnt{conjectureeigo}{theoremeigo}
\newtheorem{conjectureeigo}[conjectureeigo]{Conjecture}
\newaliascnt{notationeigo}{theoremeigo}
\crefname{theoremeigo}{theorem}{theorems}
\Crefname{theoremeigo}{Theorem}{Theorems}
\crefname{definitioneigo}{definition}{definitions}
\Crefname{definitioneigo}{Definition}{Definitions}
\crefname{lemmaeigo}{lemma}{lemmas}
\Crefname{lemmaeigo}{Lemma}{Lemmas}
\crefname{propositioneigo}{proposition}{propositions}
\Crefname{propositioneigo}{Proposition}{Propositions}
\crefname{corollaryeigo}{corollary}{corollaries}
\Crefname{corollaryeigo}{Corollary}{Corollaries}
\crefname{exampleeigo}{example}{examples}
\Crefname{exampleeigo}{Example}{Examples}
\crefname{remarkeigo}{remark}{remarks}
\Crefname{remarkeigo}{Remark}{Remarks}
\crefname{noteeigo}{note}{notes}
\Crefname{noteeigo}{Note}{Notes}
\crefname{problemeigo}{problem}{problems}
\Crefname{problemeigo}{Problem}{Problems}
\crefname{conjectureeigo}{conjecture}{conjectures}
\Crefname{conjectureeigo}{Conjecture}{Conjectures}
\crefname{notationeigo}{notation}{notations}
\Crefname{notationeigo}{Notation}{Notations}
\crefname{claimeigo}{claim}{claims}
\Crefname{claimeigo}{Claim}{Claims}
\theoremstyle{definition}
\title{Root numbers for twisted Fermat quotient curves}
\author{Ryosuke Yanagihara
}
\address{Ryosuke Yanagihara\\
Mathematical inst. Tohoku Univ.\\
6-3, Aoba, Aramaki, Aoba-ku, Sendai, 980-8578,\\ JAPAN}
\email{yanagihara.ryosuke.t1@dc.tohoku.ac.jp}
\keywords{Jacobi sum Hecke character, Relative root number, the Fleck number, Coleman's formula, Rohrlich's formula}
\subjclass{Primary: 11G40, Secondary: 14G10}
\begin{document}

\begin{abstract}
	Let $\ell$ be an odd prime, $N \geq 1$ be an integer, and $\delta \geq 1$ be a $\ell^N$-th power-free integer such that $\ord_{\ell}(\delta) = 0$ or $\ell \nmid \ord_{\ell}(\delta)$. In this paper, we give an explicit formula for the root number of the Hecke character associated with a certain quotient curve of the twisted Fermat curve $X^{\ell^N} + Y^{\ell^N} = \delta$. This result gives a generalization of Stoll \cite{stoll2002arithmetic} and Shu \cite{shu2021root}.
\end{abstract}

\maketitle
\tableofcontents

\section{Introduction}
Let $\ell$ be an odd prime, $N \geq 1$ be an integer, and $\delta \geq 1$ be an integer that is $\ell^N$-th power-free (i.e. for any prime $p$, $\delta$ is not divisible by $p^{\ell^N}$), satisfying either $\ord_{\ell}(\delta) = 0$ or $\ell \nmid \ord_{\ell}(\delta)$. The nonsingular projective curve $F_{\delta}=F_{N,\delta}$ defined over $\Q$ by the equation $X^{\ell^N} + Y^{\ell^N} = \delta$ is called a twisted Fermat curve, and is a widely studied object in arithmetic geometry (cf. \cite{gross1978some}, \cite{stoll2002arithmetic}, \cite{shu2021root}, \cite{diaconu2005twisted}).

Let $r, s, t > 0$ be integers such that $r + s + t = \ell^N$ and $\ell \nmid rst$. 
For $1 \leq i \leq N$, we define two positive integers, $r_i$ and $s_i$, such that $1 \leq r_i, s_i<\ell^i$. Let $r^{\prime}$ and $s^{\prime}$ be the positive remainders of $r$ and $s$ modulo $\ell^i$, respectively. The definitions depend on two cases:

\begin{itemize}
    \item If $r^{\prime}+s^{\prime}<\ell^i$, then we set $r_i=r^{\prime}$ and $s_i=s^{\prime}$.
    \item If $r^{\prime}+s^{\prime}>\ell^i$, then $r_i$ and $s_i$ are the positive remainders of \\ $\ell^i-r$ and $\ell^i-s$ modulo $\ell^i$, respectively.
\end{itemize}
By definition, $0<r_i+s_i<\ell^i$ and put $t_i=\ell^i-r_i-s_i$.
We should remark that $\ell\nmid r+s$ and thus, the sum of positive remainders of $r,s$ modulo $\ell^i$ cannot be $\ell^i$. The nonsingular curve $C_i = C_i^{(\delta, r_i, s_i)}$
over $\Q$ defined by the equation $y^{\ell^i} = x^{r_i}(\delta - x)^{s_i}$ gives the quotient curve of $F_{i,\delta}$ by the correspondence $F_{i,\delta}\to C_i,\ (X,Y)\mapsto (x,y)=(X^{\ell^i},X^{r_i}Y^{s_i})$ and it has genus $\frac{1}{2}(\ell^i-1)$.
Notice that $C_i^{(\delta, r_i, s_i)}$ is isomorphic
to $C_i^{(\delta, \ell^i-r_i, \ell^i-s_i)}$ over $\Q$ by
the correspondence $(x,y)\longleftrightarrow
(x,x(\delta-x)/y)$ and it is consistent with the definition of
$r_i,s_i$.

The quotient curves $C_i$ have rich arithmetic properties as shown in \cite{gross1978some} for the case when $N = \delta = 1$. One reason for the richness comes from the CM structure on the Jacobian varieties of such a quotient curve. Let $J_N$ be the Jacobian variety of $C_N$. There exists a CM Abelian variety $\jac(C_i)^{\rm{new}}$ for each $1 \leq i \leq N$ having $\Q(\zeta_{\ell^i})$ as a CM field, such that
$$ J_N \stackrel{\Q}{\sim} \bigoplus_{1 \leq i \leq N} \jac(C_i)^{\rm{new}}$$
where $\stackrel{\Q}{\sim}$ denotes an isogeny over $\Q$.
Then we have a relation on $L$-functions:
$$L(s,J_N)=\prod_{1\leq i\leq N} L(s,\jac(C_i)^{\rm{new}}).$$
On the other hand, for each Abelian variety $\jac(C_i)^{\rm{new}}$, one can associate a Hecke character $\varphi_{\delta}^{(i)} = \varphi_{\delta, r_i, s_i}^{(i)}$ and we write $\phi_{\delta}^{(i)} = \phi_{\delta, r_i, s_i}^{(i)}$ for its unitarization (see \Cref{hecke character}). 
Here, we have
$$L(s,\jac(C_i)^{\rm{new}})=L(s, \varphi_{\delta}^{(i)})=L(s-\frac{1}{2}, \phi_{\delta}^{(i)}).$$
Thus, the $L$-function of $\jac(C_i)^{\rm{new}}$ is a Hecke $L$-function. So set the complete $L$-function for $\jac(C_i)^{\rm{new}}$ as 
$$\Lambda(s, \jac(C_i)^{\rm{new}}):=N_{r_i,s_i,t_i,\delta}^{\frac{s}{2}}\Gamma_{\C}(s)^{\frac{\ell^i-1}{2}}L(s, \jac(C_i)^{\rm{new}})$$
where 
$\Gamma_{\C}(s)=2(2\pi)^{-s}\Gamma (s)$ and $N_{r_i,s_i,t_i,\delta}=\abs{D_K}\mc{N}(\mf{f}_{\phi_{\delta}^{(i)}})$ is the global conductor for $L$-function $L(s, \phi_{\delta}^{(i)})$ (cf. \cite[Theorem 4.60]{kahn2020zeta}, see \Cref{global conductor} for explicit formula). By the functional equation of the Hecke $L$-function (cf. \cite[Theorem 4.60]{kahn2020zeta})
$$ \Lambda(s, \phi_{\delta}^{(i)}) = W(\phi_{\delta}^{(i)}) \Lambda(1- s, \phi_{\delta}^{(i)})$$
where $$\Lambda(s, \phi_{\delta}^{(i)}):=N_{r_i,s_i,t_i,\delta}^{\frac{s}{2}}\Gamma_{\C}(s+\frac{1}{2})^{\frac{\ell^i-1}{2}}L(s, \phi_{\delta}^{(i)})$$ is the complete Hecke $L$-function and $W(\phi_{\delta}^{(i)}) = \pm 1$ is the root number of $\phi_{\delta}^{(i)}$, we have a functional equation for $ \Lambda(s, \jac(C_i)^{\rm{new}}) $:
$$ \Lambda(s, \jac(C_i)^{\rm{new}}) = W(\phi_{\delta}^{(i)}) \Lambda(2 - s, \jac(C_i)^{\rm{new}}). $$
From these, we only need to compute the case $i=N$.

According to the Birch and Swinnerton-Dyer conjecture, the root number is determined by the parity of the Mordell-Weil rank of $\jac(C_i)^{\rm{new}}(\Q)$ (cf. \cite[Conjecture 1.1]{Dokchitser2023}). This suggests that the root number encodes important arithmetic data. The root number for the case $N = \delta = 1$ was computed in \cite{gross1978some}, and Shu (\cite{shu2021root}) extended it to general $\delta$ using Rohrlich's formula \cite[Proposition 2]{rohrlich1992root}. Stoll (\cite{stoll2002arithmetic}) also computed the root number for $C_1^{(\delta,1,1,\ell-2)}\ (2\ell\nmid \delta)$ by using classical methods such as Eisenstein's reciprocity law. 

In this paper, we extend Shu's result to general $N$ under mild conditions on $r,s,t$ and $\delta$. Our result can also be regarded as an extension of Stoll's result \cite{stoll2002arithmetic}.

Before stating the main theorem, we have to prepare the following function:

\begin{definitioneigo}\label{J}
    For $n\geq 1, f>1$, we define the function $J(n,f)$ as following:

    For $u\in\set{0,1}, k\geq 0$, let
    \begin{align}
    i_u&=2\ceil{\frac{1}{2}\left(\dfrac{\ell^{n-1+u}}{f-1} - 1\right)}\\
    M_{u,k}&=(f-1)(i_u+1+2k)-2,\\
    v_{u,k}
    &=\frac{1}{2}\left\{M_{u,k}+2-\ell^{n-1+u}\left\{2\floor{\frac{1}{2}\left(\frac{M_{u,k}+2}{\ell^{n-1+u}}-1\right)}+1\right\}\right\},\\
    v_{2,k}
    &=\frac{1}{2}\left\{M_{1,k}+2-\ell^{n-1}\left\{2\floor{\frac{1}{2}\left(\frac{M_{1,k}+2}{\ell^{n-1}}-1\right)}+1\right\}\right\},\\
    j&=2\ceil{\frac{1}{2}\left(\dfrac{\ell^{n-1}(n\ell-n+1)+2}{f-1} - 1\right)}-2,\\
    U_k^{(u)} &=\sum_{k'\in\Z_{\geq 0}\atop k'\equiv v_{u,k} \pmod{\ell^{n-1+u}}} (-1)^{k'} \binom{M_{u,k}}{k'},\\
    U'_k&=\sum_{k'\in\Z_{\geq 0}\atop k'\equiv v_{2,k} \pmod{\ell^{n-1}}} (-1)^{k'} \binom{M_{1,k}}{k'},\\
\end{align}
    and we define
	\begin{align}
		J(n,f)=\sum_{0\leq k\leq \ceil{\frac{j-i_0}{2}}}(-U_k^{(0)}-U'_k
        +\ell U_k^{(1)}).
	\end{align}
\end{definitioneigo}
 
\begin{theoremeigo}\label{main}
Let $r, s, t > 0$ be integers such that $r + s + t = \ell^N$ and $\ell \nmid rst$. Let $\epsilon'_N \in \mu_{\ell-1}(\Q_{\ell})$, $b'_N \in \Z$, and $c_N' \in \ell \Z_{\ell}$ such that $r^r s^s (\ell^N-t)^t \delta^{r+s} = \epsilon'_N \ell^{b'_N}(1 + c_N')$.

Then the global root number of $\phi_{\delta}^{(N)}$ is given by
$$W(\phi_{\delta}^{(N)})=\dprod_{\text{$p\le \infty$}}W_p(\phi_{\delta}^{(N)})$$
where for primes $p \neq \ell$,
$$W_p(\phi_{\delta}^{(N)})
=\begin{dcases}
	i^{-\frac{\ell^{N-1}(\ell-1)}{2}}\ &(p=\infty)\\
	\leg{p}{\ell}\ &(p\mid \delta)\\
	1\ &(p\nmid \delta)
\end{dcases} $$
and,
$$W_{\ell}(\phi_{\delta}^{(N)})=
\begin{dcases}
	-\leg{rst\ord_{\ell}(\delta)(r+s)}{\ell}i^{\frac{\ell^{N-1}(\ell-1)}{2}}\ &(\ord_{\ell}(\delta)\neq 0)\\
	-\leg{(-1)^N 2rstJ}{\ell}i^{\frac{\ell^{N-1}(\ell-1)}{2}}\ &(\ord_{\ell}(\delta)=0,\ 1\leq \ord_{\ell}(c_N')\leq N)\\
	\leg{2}{\ell}i^{\frac{\ell^{N-1}(\ell-1)}{2}}\ &(\ord_{\ell}(\delta)=0,\ \ord_{\ell}(c_N')>N).
\end{dcases}$$

where

$$J= \frac{2c_N'}{\ell^{N}}J(N,2\ell^{N-\ord_{\ell}(c_N')}).$$

Since $J$ is originally defined as $J=\frac{I}{\ell^{N-1}}$ for some $I\in \Z_{\ell}$ with $\ord_{\ell}(I)=N-1$ and thus, $J$ is an $\ell$-adic unit (see \Cref{value of hilbert symbol}).
\end{theoremeigo}
Under \Cref{yosou}, this theorem can be described in a very simple form (\Cref{main yosou}). 
 The calculation of the root number is reduced to computing certain special values of the $\ell^N$-th Hilbert symbol at ramified primes and it requires complicated combinatorial arguments for $N\geq 2$ which never appear in the case when $N=1$. Our research has revealed that it can be expressed in terms of a certain alternating sum of binomial coefficients, sometimes called the Fleck number in combinatorics (cf. \cite{coffey2017higher, lettington2011fleck}). The Fleck number has been studied in detail for a long time in combinatorics (cf. \cite{Sun2006, Sun2007, wan2006combinatorial, gessel2001order, lundell1978divisibility, neuberger2020elementary}). This phenomenon is of independent interest.

We organize this paper as follows. In Section 2, we compute the $L$-function of $C_N$ and we identify it with the product of the $L$-functions of Hecke characters $\varphi_{\delta}^{(i)}\ (1 \leq i \leq N)$. In the course of the computation, we explicitly determine its unitarization $\phi_{\delta}^{(i)}$. The global root number of $\phi_{\delta}^{(i)}$ is given by the product of the local root numbers $W(\phi_{\delta, V}^{(i)})$ for primes $V$ of $\Q(e^{2\pi i/\ell^N})$:
$$ W(\phi_{\delta}^{(i)}) = \prod_{p\leq \infty} \prod_{V \mid p} W(\phi_{\delta, V}^{(i)}).$$
In the following sections we will compute $W_p(\phi_{\delta}^{(i)}) := \prod_{V \mid p} W(\phi_{\delta, V}^{(i)})$.
After the preliminaries in Section 3.1, we compute $W_p(\phi_{\delta}^{(i)})\ (p \neq \ell)$ in Section 3.2. Then, in Section 3.3, we handle the case where $p = \ell$. The calculation at ramified primes is based on Rohrlich's formula \cite[Proposition 2]{rohrlich1992root} as in Shu \cite{shu2021root}.
To do this, it is necessary to compute the conductor of the $\ell^N$-th Hilbert symbol. This computation is done using Sharifi's formula \cite[Theorem 8]{sharifi2001norm}. We also compute certain special values of the $\ell^N$-th Hilbert symbol using Coleman's formula \cite[p.89, l.2]{coleman1988stable}.

\textbf{Acknowledgments}

The author expresses deep gratitude to his supervisor, Takuya Yamauchi, for his dedicated guidance and encouragement. He carefully guided the author on the subject of this paper. The author also thanks the members of Yamauchi's laboratory and Yasuo Ohno's laboratory for his academic life. In particular, the author especially thanks Mahiro Yokomizo for his discussions.

The author also expresses his gratitude to his former supervisor, Tamotsu Ikeda (Kyoto University) and the members of his laboratory.

Finally, the author would like to thank his homeroom teacher in middle and high school, Takashi Sakurai, for sparking his interest in mathematics, the peers he attended seminars with at Hiroshima University, and his family for their financial support and understanding of his continued pursuit of mathematics.

The author is supported by JST SPRING, Grant Number JPMJSP2114.

\textbf{Notation.}

\begin{itemize}
	\item Let $\ell$ be an odd prime. Let $N \geq 1$ be an integer, and $\delta \geq 1$ be an $\ell^N$-th power-free integer (i.e. for any prime $p$, $\delta$ is not divisible by $p^{\ell^N}$), satisfying either $\ord_{\ell}(\delta) = 0$ or $\ell \nmid \ord_{\ell}(\delta)$.
        \item For a prime $p$ and integer $k\geq 0,\ n$, $p^k\| n$ means $p^k\mid n$ and $p^{k+1}\nmid n$.
	\item Let $\zeta_m = e^{2\pi i / m}$, and in particular, $\zeta := \zeta_{\ell^N}$ for simplicity. Let $K = \Q(\zeta)$ and $F = \Q(\zeta + \zeta^{-1})$.
        \item Let $\mc{O}_K$ (resp. $\mc{O}_F$) be the ring of integers of $K$ (resp. $F$).  
        \item For non-trivial characters $\chi$ and $\psi$ of the multiplicative group of a finite field $\F_q$, we write their Jacobi sum as
$$
J(\chi,\psi) = \sum_{a \in \F_q \atop a \neq 0, 1} \chi(a) \psi(1-a).
$$
	\item Let $\pi = \pi_K = \zeta - \zeta^{-1}$ and $\pi' = 1 - \zeta$. We note that $\pi$ is a prime element of $K$ and satisfies $\pi_F := \pi^2 \in \mathcal{O}_F$.
	\item For any local field $M/\Q_p$ including an $n$-th primitive root of unity, let $(a, b)_n$ be the $n$-th Hilbert symbol of $a,b\in M$. We write $[a, b]_n \in \Z / n\Z$ for the residue class such that $(a, b)_n = \zeta_n^{[a, b]_n}$.
	\item For $n,r\in\Z_{\geq 0}$ with $\ n\geq r$, let $\displaystyle\binom{n}{r}$ be the binomial coefficient and it is extended to all integers $r$ by $\displaystyle\binom{n}{r} := 0$  if $n < r$ or $r < 0$.

\end{itemize}

\section{$L$-function for the curve $C_N$}

In this section, we give the definition of $L$-functions for Abelian varieties and curves. Then, we compute the $L$-polynomials for $C_N$.

\subsection{\ $L$-functions for Abelian varieties}\label{l for abelian}

In this section, we refer to \cite[p.180, (d)]{milne1972arithmetic}. Let $F$ be a number field and $A$ be an Abelian variety over $F$. Let $V_{\ell}=T_{\ell}(A)\otimes_{\Z_{\ell}} \Q_{\ell}$ be $\ell$-adic rational Tate module of $A$. Let $\rho_{\ell,A}:\gal(\overline{F}/F)\to \GL_{\Q_{\ell}}(V_{\ell})$ be the $\ell$-adic Galois representation associated to $V_{\ell}$. For each finite prime $\mf{p}\nmid\ell$, let $D_{\mf{p}}$ (resp. $I_{\mf{p}}$) be the decomposition group at $\mf{p}$ (resp. the inertia group at $\mf{p}$) and $\frob_{\mf{p}}\in D_{\mf{p}}$ be the arithmetic Frobenius.
Then we define the Euler factor of the $L$-function of $X$ at $\mf{p}$
$$L_{\mf{p}}(s,A):=L_{\mf{p}}(s,\rho_{\ell,A}):=\det(1_{{V_{\ell}}^{I_{\mf{p}}}}-\rho_{\ell,A}(\frob_{\mf{p}})|_{{V_{\ell}}^{I_{\mf{p}}}} \mc{N}(\mf{p})^{-s})\inv$$
where $\mc{N}$ denotes the absolute norm and which is independent of the choice of the prime $\ell$
. When $F=\Q$,\ $L_{(p)}(s,A)$ is written as $L_{p}(s,A)$.
Now we define the $L$-function of $A$ as follows:
$$L(s,A)=\prod_{\mf{p}} L_{\mf{p}}(s,A)$$
where $\mf{p}$ runs over all finite primes of $F$.

\subsection{\ $L$-function for nonsingular projective algebraic curves}
Let us keep the notation in \Cref{l for abelian}. Let $X$ be a nonsingular projective algebraic curve over $F$, $\jac(X)$ be the Jacobian variety of $X$ and let $L_{\mf{p}}(s,X):=L_{\mf{p}}(s,\rho_{\ell,\jac(X)})$. We also define global $L$-function of $X$ as $$L(s,X)=\prod_{\mf{p}} L_{\mf{p}}(s,X).$$
For each finite prime $\mf{p}$ such that $X$ has good reduction at $\mf{p}$, it is known that the congruent zeta function $Z(T,X_{/\F_{\mf{p}}})$ of $X_{/\F_{\mf{p}}}$ (where $\F_{\mf{p}}$ is the residue field of $\mf{p}$ and its cardinality is $q$) has the form
$$Z(T,X_{/\F_{\mf{p}}})=\frac{L(T,X_{/\F_{\mf{p}}})}{(1-T)(1-qT)}=Z(T,{\P^1}_{/\F_q})L(T,X_{/\F_{\mf{p}}})$$ with some integral polynomial $L(T,X_{/\F_{\mf{p}}})$ (called the $L$-polynomial) and $L_{\mf{p}}(s,X)=L(q^{-s},X_{/\F_{\mf{p}}})\inv$.

We note that
\[
		L(T,X_{/\F_{\mf{p}}})
        =\frac{Z(T,X_{/\F_{\mf{p}}})}{Z(T,\P^1_{/\F_{\mf{p}}})}
        =\exp\left(\sum_{k\geq 1} (\# X(\F_{q^k})-q^k-1)\frac{T^k}{k}\right).
	\]

\subsection{\ Computation of the $L$-polynomial for the curve $C_N$}

Let $r, s, t > 0$ be integers such that $r + s + t = \ell^N$ and $\ell \nmid rst$.
Let $C_N$ be the nonsingular model of projective closure $\widetilde{C}^0_N$ of the affine curve $C^0_N:y^{\ell^N} = x^r(\delta - x)^s$ which is also defined over $\Q$.
To compute the $L$-polynomial, we first need the formula for $\# C_N(\F_q)$ for certain finite fields $\F_q$ as follows: 
\begin{propositioneigo}\label{count}
	Let $p$ be a prime such that $C_N$ has good reduction, namely $p\nmid \delta N$ and $q$ be a power of $p$. Then

	$\# C_N(\F_q)=
	\begin{dcases}
		q+1\ &(\ell\nmid q-1)\\
		q+1+\dsum_{\substack{\chi^{\ell^N}=1\\ \chi\neq 1}} \chi(\delta)^{r+s}J(\chi^r,\chi^s)\ &(\ell\mid q-1)
	\end{dcases}$

	where $\chi$ runs over the characters of $\F_q\x$.
\end{propositioneigo}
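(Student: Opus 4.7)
The plan is to exploit the fact that the projection $\pi\colon C_N \to \P^1,\ (x,y)\mapsto x$ makes $C_N$ a cyclic $\mu_{\ell^N}$-cover of $\P^1$, and to count $\F_q$-points fiber by fiber:
\[ \# C_N(\F_q) = \sum_{x \in \P^1(\F_q)} \# \pi\inv(x)(\F_q). \]
Writing $f(x) := x^r(\delta - x)^s$, the only possible branch points lie above $x=0,\ \delta,\ \infty$, where $f$ has valuations $r,\ s,\ -(r+s)$ respectively. Since $r+s = \ell^N - t$ and $\ell \nmid rst$, each of $r,\ s,\ r+s$ is coprime to $\ell^N$, so $\pi$ is totally ramified at each branch point. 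Hence the normalization $C_N$ has exactly one point above each of $0,\delta,\infty$, and all three are $\F_q$-rational, contributing $3$ to the count.

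For $x \in \F_q \setminus \{0,\delta\}$ (using $p \nmid \delta$), the fiber is $\{y \in \F_q : y^{\ell^N} = f(x)\}$ with $f(x) \in \F_q\x$, whose cardinality equals the character sum $\sum_{\chi^{\ell^N}=1} \chi(f(x))$ taken over characters $\chi$ of $\F_q\x$. If $\ell \nmid q-1$, the only such character is trivial, each fiber contributes $1$, and so $\# C_N(\F_q) = 3 + (q-2) = q+1$. If $\ell \mid q-1$, the trivial character contributes $q-2$, and for nontrivial $\chi$ the substitution $x = \delta u$ (extending $\chi(0)=0$) converts the sum to
\[ \sum_{x \in \F_q \setminus \{0,\delta\}} \chi(x)^r \chi(\delta - x)^s = \chi(\delta)^{r+s} J(\chi^r, \chi^s), \]
the Jacobi sum appearing in the statement. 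Note that since $\chi$ has order dividing $\ell^N$ and $\ell \nmid r,s$, the characters $\chi^r,\chi^s$ are nontrivial whenever $\chi$ is, so the Jacobi sum is well-defined. Summing the three branch-point contributions, the trivial-character contribution, and the nontrivial-character contributions yields the claimed formula.

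The only non-routine step is the geometric assertion that the normalization has exactly one $\F_q$-point over each branch point; this is the standard description of tame cyclic covers $y^n = f(x)$, where the number of points over $x_0$ is $\gcd(n, v_{x_0}(f))$, and it is the step where the full set of coprimality conditions $\ell \nmid r,\ s,\ t$ is used. The rest is bookkeeping with multiplicative characters, and I expect no serious obstacle.
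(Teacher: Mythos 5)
Your proposal is correct and follows essentially the same route as the paper: both reduce the count to the character-sum identity $\#\{y:y^{\ell^N}=a\}=\sum_{\chi^{\ell^N}=1}\chi(a)$ over the fibers away from $0,\delta,\infty$, and both use $\ell\nmid rst$ to see that the normalization has exactly one rational point over each of the three branch points (the paper phrases this as the singularities being unibranch cusps, you as total ramification of the Kummer cover). The bookkeeping with the trivial character and the substitution $x=\delta u$ matches the paper's computation, so there is nothing to add.
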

\begin{proof}
    $\widetilde{C}^0_N$ can be seen as the gluing curve of the two affine curves $C^0_N$ and ${C^0_N}':w^{\ell^N}=v^{\ell^N-r-s}(v\delta-1)^s$ by a rational map $(x,y)\mapsto (\frac{1}{v},\frac{w}{v})$.
	We note that a $\widetilde{C}^0_N$ has three singular points. Explicitly, they are  $(x,y)=(0,0),(\delta,0)$ on $C^0_N$ and the infinite point  $(v,w)=(0,0)$ on ${C^0_N}'$. Since these are cusps, the number of $\F_q$ rational points is invariant under blowing up i.e. $\# C_N(\F_q)=\# \widetilde{C}^0_N(\F_q)$. Therefore, since $\widetilde{C}^0_N$ is $C^0_N$ together with the one infinite point as a set, we have
	$$\# C_N(\F_q)=\# C^0_N(\F_q)+1.$$
	 Thus, the claim follows together with the following computation:
	\begin{align}
		\# C^0_N(\F_q)=& \sum_{u\in\F_q} \# \Set{y\in\F_q}{y^{\ell^N}=u^r(\delta-u)^s}\\
		=& \sum_{u\in\F_q} \sum_{\chi^{\ell^N}=1} \chi(u^r(\delta-u)^s)\\
		=& \sum_{\chi^{\ell^N}=1} \sum_{u\in\F_q} \chi(\delta)^{r+s}\chi({u'}^r(1-u')^s)\ (u:=\delta u')\\
		=& \sum_{\chi^{\ell^N}=1} \chi(\delta)^{r+s}\sum_{u\in\F_q} \chi^r(u')\chi^s(1-u')\\
		=& \dsum_{\substack{\chi^{\ell^N}=1}} \chi(\delta)^{r+s}J(\chi^r,\chi^s).
	\end{align}
\end{proof}


Let $f=f_p$ be the order of a prime $p\neq \ell$ in $(\Z/\ell\Z)^\times$ and let $e_p = \ord_{\ell}(p^f - 1)$. Note that $e_p$ could be strictly greater than 1 (for instance, when $p = 3$ and $\ell = 11$, we have $f = 5$ and $e_p = 2$). 


The following lemma is easily proved and we omit a proof.

\begin{lemmaeigo}
	For each integer $t \geq 1$ and each prime $p\neq \ell$, the order of $p$ in $(\Z / \ell^t \Z)^\times$ is given by $\ord(p \pmod{\ell^t}) = \ell^{N_t} f$, where
	$$ N_t =
	\begin{dcases}
		0 & (1 \leq t \leq e_p) \\
		t - e_p & (t > e_p).
	\end{dcases} $$
\end{lemmaeigo}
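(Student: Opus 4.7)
The plan is to reduce the lemma to a standard application of the ``lifting the exponent'' identity for the odd prime $\ell$.

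First, since $(\Z/\ell^t\Z)^\times$ is cyclic of order $\ell^{t-1}(\ell-1)$ and the reduction map $(\Z/\ell^t\Z)^\times \twoheadrightarrow (\Z/\ell\Z)^\times$ sends $p$ to an element of order $f$, the order of $p$ modulo $\ell^t$ divides $\ell^{t-1}(\ell-1)$ and maps to an element of order $f$ mod $\ell$. Writing this order as $\ell^a b$ with $b \mid \ell-1$, the reduction argument forces $b = f$, so the order has the form $\ell^{N_t} f$ for some $N_t \geq 0$. Determining $N_t$ amounts to finding the smallest $n \geq 0$ such that $p^{f\ell^n} \equiv 1 \pmod{\ell^t}$.

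Next, by the definition of $e_p$, I would write $p^f = 1 + \ell^{e_p}u$ with $u \in \Z$ coprime to $\ell$, noting that $e_p \geq 1$ automatically since $p^f \equiv 1 \pmod \ell$. The key ingredient, which I would prove by induction on $k$, is
\[
(1 + \ell^{e_p} u)^{\ell^k} = 1 + \ell^{e_p + k} u_k, \qquad \gcd(u_k, \ell) = 1.
\]
The inductive step expands $(1 + \ell^{e_p + k - 1} u_{k-1})^{\ell}$ by the binomial theorem: the $j = 1$ term contributes $\ell \cdot \ell^{e_p + k - 1} u_{k-1} = \ell^{e_p + k} u_{k-1}$, while for $2 \leq j \leq \ell - 1$ the term $\binom{\ell}{j}\ell^{j(e_p+k-1)} u_{k-1}^{j}$ has $\ell$-adic valuation at least $1 + 2(e_p + k - 1) \geq e_p + k + 1$ (using $\ell \mid \binom{\ell}{j}$ for odd $\ell$ and $e_p + k \geq 2$), and the $j = \ell$ term has valuation $\ell(e_p + k - 1) \geq e_p + k + 1$ for $\ell \geq 3$. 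It follows that $\ord_\ell(p^{f\ell^k} - 1) = e_p + k$ for all $k \geq 0$.

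Finally, combining the two ingredients, $N_t$ is the smallest $n \geq 0$ with $e_p + n \geq t$: this gives $N_t = 0$ when $t \leq e_p$ and $N_t = t - e_p$ when $t > e_p$, as claimed. There is no serious obstacle in the argument; the only point requiring any care is the binomial manipulation above, which is textbook but genuinely relies on $\ell$ being odd, a standing hypothesis of the paper.
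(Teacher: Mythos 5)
Your proof is correct. Note that the paper gives no proof at all for this lemma (it is dismissed as "easily proved"), so there is nothing to compare against; your lifting-the-exponent argument, showing $\ord_\ell\bigl(p^{f\ell^k}-1\bigr)=e_p+k$ by induction via the binomial expansion (where the hypotheses $\ell$ odd and $e_p+k\ge 2$ are exactly what kill the higher binomial terms), is the standard way to fill this in. One small imprecision: the reduction to $(\Z/\ell\Z)^\times$ by itself only forces $f\mid b$, not $b=f$; the reverse divisibility $b\mid f$ follows from your valuation identity, since $p^{f\ell^k}\equiv 1\pmod{\ell^t}$ once $e_p+k\ge t$, so the order divides $f\ell^k$ — your later steps supply this, and the argument as a whole is complete.
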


Here, we review the power residue symbol. For a integer $n\geq 1$ and a prime ideal $\mf{p}\nmid n$ of a number field $K\supset \Q(\zeta_{n})$ and $\alpha\in\mc{O}_K\setminus{\mf{p}}$, $n$-th power residue symbol $\leg{\alpha}{\mf{p}}_n$ is a unique $n$-th root of unity which satisfy $$\alpha^{\frac{\mc{N}\mf{p}-1}{n}}\equiv \leg{\alpha}{\mf{p}}_n \pmod{\mf{p}}.$$

Now we will prepare the family of characters $\{\chi_{p^{kf}, \ell^t}\}_{k \geq 1, 1 \leq t \leq e_p + \ord_{\ell}(k)}$ where $\chi_{p^{kf}, \ell^t}$ is a character of $\F_{p^{kf}}^\times$ and its order is $\ell^t$ as follows:

First we will define $\chi_{p^{\ell^j f}, \ell^t}$ for $j \geq 0$ and $1 \leq t \leq e_p + j$. For $j = 0$, let $\mf{p}_t$ be a prime ideal over $p$ in $\Q(\zeta_{\ell^t})$, and we define $\chi_{p^{\ell^j f}, \ell^t} := \leg{}{\mf{p}_t}_{\ell^t}$. For $j > 0$, let $\mf{p}$ be a prime ideal over $p$ in $\Q(\zeta_{\ell^{j + e_p}})$, and define $\chi_{p^{\ell^j f}, \ell^t} := \leg{}{\mf{p}}_{\ell^t}$.

Next, we will define $\chi_{p^{kf}, \ell^t}\ (k \geq 1)$. When $\ell^j \| k$, let $\chi_{p^{kf}, \ell^t} := \chi_{p^{\ell^j f}, \ell^t} \circ N_{\F_{p^{kf}} / \F_{p^{\ell^j f}}}$ for $1 \leq t \leq e_p + j$.
Here, these characters satisfy certain compatibility:
\begin{propositioneigo}\label{norm compati}
	Let $\alpha \in \Z[\zeta_{\ell^{n+1}}]$ and $\mf{P} \subset K = \Q(\zeta_{\ell^{n+1}})$ be an unramified prime ideal, and let $\mf{p}=\mf{P}\cap F= \Q(\zeta_{\ell^n}) \subset F $ be the prime ideal lying below $\mf{P}$. Then,
	$$ \leg{\alpha}{\mf{P}}_{\ell^n} = \leg{N_{\F_{\mf{P}} / \F_{\mf{p}}}(\alpha)}{\mf{p}}_{\ell^n}. $$
	Therefore, in particular,
	$$ \chi_{p^{\ell^j f}, \ell^{e_p + j}}^{\ell} = \chi_{p^{\ell^{j-1} f}, \ell^{e_p + j - 1}} \circ N_{\F_{p^{\ell^j f}} / \F_{p^{\ell^{j-1} f}}} \quad (j \geq 1). $$
\end{propositioneigo}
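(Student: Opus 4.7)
The plan is to prove the first identity by directly computing both sides from the defining congruence of the $\ell^n$-th power residue symbol, using the description of the residue-field norm as a product of Frobenius conjugates. The second identity will then follow by specialising the first to $(K, F) = (\Q(\zeta_{\ell^{e_p+j}}), \Q(\zeta_{\ell^{e_p+j-1}}))$ and observing that $\leg{\cdot}{\mf{P}}_{\ell^{n}} = \leg{\cdot}{\mf{P}}_{\ell^{n+1}}^{\ell}$.

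Concretely, I would set $q = \mc{N}(\mf{p})$ and $Q = \mc{N}(\mf{P}) = q^{f}$ with $f = [\F_{\mf{P}} : \F_{\mf{p}}]$. Since $\mf{P}$ is unramified above the rational prime $p$, so is $\mf{p}$, and since $\mu_{\ell^n} \subset F$ lifts to $\F_{\mf{p}}^{\times}$ by Hensel's lemma, I obtain $\ell^n \mid q - 1$. In particular $\ell^n \mid Q - 1$, so both power residue symbols in the statement are well-defined. The crux is then the telescoping identity
\[
\frac{Q - 1}{\ell^n} \;=\; \frac{q - 1}{\ell^n} \cdot (1 + q + q^2 + \cdots + q^{f-1}),
\]
combined with $N_{\F_{\mf{P}}/\F_{\mf{p}}}(\alpha) \equiv \alpha^{1 + q + \cdots + q^{f-1}} \pmod{\mf{P}}$, which comes from the fact that $\F_{\mf{P}}/\F_{\mf{p}}$ is cyclic of degree $f$ with Frobenius $x \mapsto x^{q}$. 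Substituting this into the defining congruences yields
\[
\leg{N_{\F_{\mf{P}}/\F_{\mf{p}}}(\alpha)}{\mf{p}}_{\ell^n} \equiv N(\alpha)^{(q-1)/\ell^n} \equiv \alpha^{(Q-1)/\ell^n} \equiv \leg{\alpha}{\mf{P}}_{\ell^n} \pmod{\mf{P}}.
\]
Since both sides lie in $\mu_{\ell^n}$ and $\mu_{\ell^n}$ injects into $(\mc{O}_K/\mf{P})^{\times}$, the congruence forces equality.

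For the second assertion, I would apply the first identity with $n$ replaced by $e_p + j - 1$, taking $\mf{P}$ to be the prime of $\Q(\zeta_{\ell^{e_p+j}})$ above $p$ that was fixed in the definition of $\chi_{p^{\ell^{j} f}, \ell^{e_p+j}}$, and letting $\mf{p} = \mf{P} \cap \Q(\zeta_{\ell^{e_p+j-1}})$. Using $\leg{\alpha}{\mf{P}}_{\ell^{e_p+j-1}} = \leg{\alpha}{\mf{P}}_{\ell^{e_p+j}}^{\ell}$, the left-hand side of the first identity becomes $\chi_{p^{\ell^{j} f}, \ell^{e_p+j}}(\alpha)^{\ell}$, while the right-hand side is $\chi_{p^{\ell^{j-1} f}, \ell^{e_p+j-1}}(N_{\F_{p^{\ell^{j} f}} / \F_{p^{\ell^{j-1} f}}}(\alpha))$ by definition. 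I do not expect any serious obstacle; the only care needed is bookkeeping — matching the prime ideals chosen in the definitions of the $\chi$'s with the pair $(\mf{P}, \mf{p})$ used when invoking the first identity, and identifying the residue-field norm there with $N_{\F_{p^{\ell^{j} f}} / \F_{p^{\ell^{j-1} f}}}$.
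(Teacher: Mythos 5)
Your proof is correct, but it takes a genuinely different route from the paper. The paper deduces the norm compatibility from local class field theory: it writes $\leg{\alpha}{\mf{P}}_{\ell^n}$ as a tame Hilbert symbol $(\alpha,\pi)_{K_{\mf{P}},\ell^n}$ for a uniformizer $\pi$ of $F_{\mf{p}}$ (which stays prime in the unramified extension $K_{\mf{P}}$), applies the Hilbert symbol lifting formula $(\alpha,\pi)_{K_{\mf{P}},\ell^n}=(N_{K_{\mf{P}}/F_{\mf{p}}}(\alpha),\pi)_{F_{\mf{p}},\ell^n}$, and then identifies the local norm of a unit with the residue-field norm via $\gal(K_{\mf{P}}/F_{\mf{p}})\cong\gal(\F_{\mf{P}}/\F_{\mf{p}})$. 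You instead give the elementary finite-field computation: with $q=\mc{N}(\mf{p})$, $Q=q^f=\mc{N}(\mf{P})$, the factorization $\frac{Q-1}{\ell^n}=\frac{q-1}{\ell^n}(1+q+\cdots+q^{f-1})$ together with $N_{\F_{\mf{P}}/\F_{\mf{p}}}(\alpha)\equiv\alpha^{1+q+\cdots+q^{f-1}}\pmod{\mf{P}}$ gives the congruence directly, and injectivity of $\mu_{\ell^n}$ into the residue field (as $\mf{P}\nmid\ell$) upgrades it to an equality of roots of unity. Your argument is self-contained and avoids invoking the lifting formula of Bender, at the cost of redoing by hand what that formula packages; the paper's version is shorter given that Hilbert-symbol machinery is used heavily elsewhere in the text. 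One small point of care, which affects the paper's proof as much as yours: the deduction of the identity $\chi_{p^{\ell^j f},\ell^{e_p+j}}^{\ell}=\chi_{p^{\ell^{j-1}f},\ell^{e_p+j-1}}\circ N_{\F_{p^{\ell^j f}}/\F_{p^{\ell^{j-1}f}}}$ requires the prime of $\Q(\zeta_{\ell^{e_p+j-1}})$ chosen in the definition of $\chi_{p^{\ell^{j-1}f},\ell^{e_p+j-1}}$ to be the one lying below the prime chosen for $\chi_{p^{\ell^j f},\ell^{e_p+j}}$; you rightly flag this bookkeeping, and it should be fixed by choosing the primes compatibly (a different choice only alters the characters by a Galois conjugate, which is harmless for the $L$-function).
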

\begin{proof}
	Let $\pi$ be a prime element of $F_{\mf{p}}$. Then $\pi$ is also a prime element of $K_{\mf{P}}$. By the Hilbert symbol lifting formula \cite[Corollary]{bender1973lifting}, we have
	\begin{align}
		(\alpha, \pi)_{K_{\mf{P}}, \ell^n} &= (N_{K_{\mf{P}} / F_{\mf{p}}}(\alpha), \pi)_{F_{\mf{p}}, \ell^n} \\
		&= \leg{\alpha}{\mf{P}}_{\ell^n} = \leg{N_{K_{\mf{P}} / F_{\mf{p}}}(\alpha)}{\mf{p}}_{\ell^n}.
	\end{align}


    Since $K_{\mf{P}} / F_{\mf{p}}$ is unramified, we have $\gal(K_{\mf{P}} / F_{\mf{p}}) \cong \gal(\F_{\mf{P}}/ \F_{\mf{p}})$, and
	$$ N_{K_{\mf{P}} / F_{\mf{p}}}(\alpha) = N_{\F_{\mf{P}} / \F_{\mf{p}}}(\alpha). $$
\end{proof}
From the above, we can compute the $L$-function:
\begin{propositioneigo}\label{l function}
    Let $p$ be a prime such that $C_N$ has good reduction. The $L$-polynomial of $C_N$ at $p$ is given by
    $$L_{p}(T,{C_N}_{/\Q})
    =\prod_{1\leq j\leq N} \prod_{\substack{1\leq i\leq \ell^j-1\\ \ell\nmid i}} \left(1+\chi_{p^{\ell^{N_j}f},\ell^j}(\delta)^{i}J\left(\chi_{p^{\ell^{N_j}f},\ell^j}^{ri},\chi_{p^{\ell^{N_j}f},\ell^j}^{si}\right)T^{\ell^{N_j}f}\right)^{\frac{1}{\ell^{N_j}f}}.$$
    The $L$-function of $C_N$ at $p$ is given by
    $$L_{p}(s,{C_N}_{/\Q})
    =\prod_{1\leq j\leq N} \prod_{\substack{\mf{p}_j\mid p}} \left(1+\leg{\delta}{\mf{p}_j}_{\ell^j} J\left(\leg{\cdot}{\mf{p}_j}_{\ell^j}^r,\leg{\cdot}{\mf{p}_j}_{\ell^j}^s\right)\mc{N}\mf{p}_j^{-s}\right).$$
    Here, $\mf{p}_j$ runs over all prime ideals of $\Q(\zeta_{\ell^j})$ lying above $p$.
\end{propositioneigo}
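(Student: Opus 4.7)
The plan is to start from the exponential formula
\[
L(T, C_{N/\F_p}) = \exp\!\left(\sum_{k \geq 1}(\#C_N(\F_{p^k}) - p^k - 1)\frac{T^k}{k}\right)
\]
and substitute \cref{count}. Since a non-trivial character $\chi$ of $\F_{p^k}^\times$ with $\chi^{\ell^N} = 1$ exists only when $\ell\mid p^k-1$, i.e.\ $f\mid k$, writing $k = mf$ yields
\[
\log L_p(T, C_{N/\Q}) = \sum_{m \geq 1}\frac{T^{mf}}{mf}\sum_{\substack{\chi\neq 1\\ \chi^{\ell^N}=1}}\chi(\delta)^{r+s}J(\chi^r,\chi^s),
\]
with $\chi$ ranging over characters of $\F_{p^{mf}}^\times$.

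Next I would stratify the inner sum by the exact order $\ell^j$ of $\chi$ (for $1\leq j\leq N$). Characters of $\F_{p^{mf}}^\times$ of order $\ell^j$ exist iff $\ell^{N_j}\mid m$; writing $m = \ell^{N_j}m'$ and combining \cref{norm compati} with the surjectivity of $N := N_{\F_{p^{mf}}/\F_{p^{\ell^{N_j}f}}}$, every character of order dividing $\ell^j$ on $\F_{p^{mf}}^\times$ is the pullback along $N$ of a unique character of the corresponding subgroup on $\F_{p^{\ell^{N_j}f}}^\times$ (both subgroups being cyclic of order $\ell^j$). Hence every character of exact order $\ell^j$ has the form $\chi^i\circ N$ for $\chi := \chi_{p^{\ell^{N_j}f},\ell^j}$ and $1\leq i\leq \ell^j - 1$ with $\ell\nmid i$. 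The Hasse--Davenport relation for Gauss sums, combined with $J = g\cdot g/g$, then gives
\[
J_{\F_{p^{mf}}}\!\bigl((\chi^i\circ N)^r,(\chi^i\circ N)^s\bigr) = (-1)^{m'-1}J_{\F_{p^{\ell^{N_j}f}}}\!(\chi^{ir},\chi^{is})^{m'}, \quad (\chi^i\circ N)(\delta) = \chi(\delta)^{im'}.
\]

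Setting $\alpha_i := \chi^i(\delta)^{r+s}J(\chi^{ir},\chi^{is})$ and interchanging summations, the $j$-th stratum contributes
\[
\frac{1}{\ell^{N_j}f}\sum_i\sum_{m'\geq 1}\frac{(-1)^{m'-1}(\alpha_i T^{\ell^{N_j}f})^{m'}}{m'} = \frac{1}{\ell^{N_j}f}\sum_i \log\!\bigl(1 + \alpha_i T^{\ell^{N_j}f}\bigr),
\]
and taking the exponential together with the product over $j$ gives the first claimed formula. For the $L$-function identity, I would regroup the inner product $\prod_i$ by Galois orbits under $i\mapsto pi \bmod \ell^j$: each orbit has size $\ord_{\ell^j}(p) = \ell^{N_j}f$ and is in bijection with a prime $\mf{p}_j\mid p$ of $\Q(\zeta_{\ell^j})$ satisfying $\mc{N}\mf{p}_j = p^{\ell^{N_j}f}$; under the identification $\chi_{p^{\ell^{N_j}f},\ell^j}\leftrightarrow \leg{\cdot}{\mf{p}_j}_{\ell^j}$ and the substitution $T = p^{-s}$ (so $T^{\ell^{N_j}f} = \mc{N}\mf{p}_j^{-s}$), the orbit-grouped contributions reassemble into a product over primes of $\Q(\zeta_{\ell^j})$, producing the stated Euler-product formula.

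The principal technical obstacle is the careful sign-tracking in the Hasse--Davenport relation for Jacobi sums via $J = g \cdot g/g$, together with the accurate bookkeeping of the bijection between Galois orbits of characters of order $\ell^j$ on $\F_{p^{mf}}^\times$ and primes $\mf{p}_j$ above $p$ in $\Q(\zeta_{\ell^j})$.
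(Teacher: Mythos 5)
Your proposal is correct, and it organizes the computation genuinely differently from the paper. The paper stratifies the double sum by the $\ell$-adic valuation of the degree index $k$ (the cases $\ell^j\,\|\,k$), summing at each level over \emph{all} nontrivial characters of order dividing $\ell^{t_k}$; this forces the restriction $\ell\nmid k'$ in the inner sum, hence the identity $\sum_{\ell\nmid k}(-1)^{k-1}T^k/k=\log(1+T)-\tfrac1\ell\log(1+T^\ell)$, a telescoping of the $-\tfrac1\ell\log(1+T^\ell)$ terms against the next level via Hasse--Davenport and \cref{norm compati}, and a case split according to $N\ge e_p$ or $N<e_p$. Your stratification by the \emph{exact} order $\ell^j$ of the character removes all of that: for fixed $j$ the admissible degrees are precisely the multiples of $\ell^{N_j}$, the sum over $m'\ge 1$ is unrestricted, and the full logarithm series appears at once with no telescoping and no case distinction. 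Both routes rest on the same two inputs (Hasse--Davenport and the norm-compatibility of the character family), and the final regrouping into Euler factors over the primes $\mf{p}_j$ is the same; yours is the shorter and cleaner organization. Two points to make explicit when writing it up: first, you need $\chi^{i(r+s)}\neq 1$ before invoking $J=g\cdot g/g$, which holds because $\ell\nmid i(r+s)$; second, for the orbit regrouping you should note that $\alpha_i$ is constant on each orbit $i\mapsto pi$ (via $J(\chi^p,\psi^p)=J(\chi,\psi)$ and $\delta^p=\delta$), so the $\ell^{N_j}f$ factors of one orbit reassemble into a single Euler factor. Finally, your faithful substitution of \cref{count} produces the exponent $i(r+s)$ on $\chi(\delta)$, whereas the displayed statement has exponent $i$; that discrepancy is already present in the paper's own proof, so your bookkeeping is, if anything, the more careful one.
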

\begin{proof}

    By definition, we have
    \begin{align}
        \log (L_{p}(T,{C_N}_{/\Q}))
        &=\sum_{n\geq 1} (\# C_N(\F_{p^{n}})-p^{n}-1)\frac{T^{n}}{n}.
    \end{align}

    Here, $\# C_N(\F_{p^{n}})-p^{n}-1\neq 0$ is equivalent to $p^n\equiv 1\pmod{\ell}$, that is $f\mid n$. Therefore, setting $n=kf$, we get
    $$\log (L_{p}(T,{C_N}_{/\Q}))
    =\sum_{k\geq 1} (\# C_N(\F_{p^{kf}})-p^{kf}-1)\frac{T^{kf}}{kf}
    =\sum_{k\geq 1} \dsum_{\substack{\chi^{\ell^N}=1\\ \chi\neq 1}} \chi(\delta)^{r+s}J(\chi^r,\chi^s)\frac{T^{kf}}{kf}.$$
    Since $\chi$ runs over the characters of $\F_{p^{kf}}\x$, from \Cref{count}, if we set $t_k=\min\{e_p+\ord_{\ell}(k), N\}$, we have
    $$\log (L_{p}(T,{C_N}_{/\Q}))
    =\sum_{k\geq 1} \sum_{1\leq i\leq \ell^{t_k}-1} \chi_{p^{kf}, \ell^{t_k}}(\delta)^i J(\chi_{p^{kf}, \ell^{t_k}}^{ri}, \chi_{p^{kf}, \ell^{t_k}}^{si})\frac{T^{kf}}{kf}.$$
    Therefore, we must consider the form $t_k$ will take for each $k$.

    \textbf{Case (i) $N\geq e_p$.}

	In this case,
    $$t_k=
    \begin{dcases}
        e_p+j\ &(\text{for $0\leq j\leq N-e_p-1$, $\ell^j\| k$})\\
        N\ &(\ell^{N-e_p}\mid k)
    \end{dcases}$$
    therefore, we have
    \begin{align}
		&\log (L_{p}(T,{C_N}_{/\Q}))\\
		=&\sum_{0\leq j\leq N-e_p-1}\sum_{\ell^j\| k} \left(\sum_{1\leq i\leq \ell^{e_p+j}-1} \chi_{p^{kf}, \ell^{e_p+j}}(\delta)^i J(\chi_{p^{kf}, \ell^{e_p+j}}^{ri}, \chi_{p^{kf}, \ell^{e_p+j}}^{si})\right)\frac{T^{kf}}{kf}\\
		&+\sum_{\ell^{N-e_p}\mid k} \left(\sum_{1\leq i\leq \ell^{N}-1} \chi_{p^{kf}, \ell^{N}}(\delta)^i J(\chi_{p^{kf}, \ell^{N}}^{ri}, \chi_{p^{kf}, \ell^{N}}^{si})\right)\frac{T^{kf}}{kf}\\

		=&\sum_{0\leq j\leq N-e_p-1}\sum_{\ell\nmid k'} \left(\sum_{1\leq i\leq \ell^{e_p+j}-1} \chi_{p^{\ell^j k'f}, \ell^{e_p+j}}(\delta)^i J(\chi_{p^{\ell^j k'f}, \ell^{e_p+j}}^{ri}, \chi_{p^{\ell^j k'f}, \ell^{e_p+j}}^{si})\right)\frac{T^{\ell^j k'f}}{\ell^j k'f}\\
		&+\sum_{k'\geq 1} \left(\sum_{1\leq i\leq \ell^{N}-1} \chi_{p^{\ell^{N-e_p}k'f}, \ell^{N}}(\delta)^i J(\chi_{p^{\ell^{N-e_p}k'f}, \ell^{N}}^{ri}, \chi_{p^{\ell^{N-e_p}k'f}, \ell^{N}}^{si})\right)\frac{T^{\ell^{N-e_p}k'f}}{\ell^{N-e_p}k'f}.
	\end{align}
    Note that $k=\ell^jk'$ in the first term,\ $k=\ell^{N-e_p}k'$ in  the second one in the last line.
	Here, by the Hasse--Davenport relation, we have $J(\chi_{p^{\ell^j k'f}, \ell^{e_p+j}}^{ri}, \chi_{p^{\ell^j k'f}, \ell^{e_p+j}}^{si})=(-1)^{k'-1}J(\chi_{p^{\ell^j f}, \ell^{e_p+j}}^{ri}, \chi_{p^{\ell^j f}, \ell^{e_p+j}}^{si})^{k'}$, and by the definition of the family \\
	$\{\chi_{p^{kf}, \ell^t}\}_{k\geq 1, 1\leq t\leq e_p+\ord_{\ell}(k)}$, we get $\chi_{p^{\ell^j k'f}, \ell^{e_p+j}}(\delta)=\chi_{p^{\ell^j f}, \ell^{e_p+j}}(N_{\F_{p^{\ell^{j}k' f}}/\F_{p^{\ell^{j} f}}}(\delta))=\chi_{p^{\ell^j f}, \ell^{e_p+j}}(\delta)^{k'}$.
	Therefore,
	\begin{align}
		&\log (L_{p}(T,{C_N}_{/\Q}))\\
		=&\sum_{0\leq j\leq N-e_p-1}\frac{1}{\ell^j f}\sum_{\ell\nmid k'} \sum_{1\leq i\leq \ell^{e_p+j}-1} \frac{(-1)^{k'-1}}{k'}\left(\chi_{p^{\ell^j f}, \ell^{e_p+j}}(\delta)^i J(\chi_{p^{\ell^j f}, \ell^{e_p+j}}^{ri}, \chi_{p^{\ell^j f}, \ell^{e_p+j}}^{si})T^{\ell^jf}\right)^{k'}\\
		&+\frac{1}{\ell^{N-e_p} f}\sum_{k'\geq 1} \sum_{1\leq i\leq \ell^{N}-1} \frac{(-1)^{k'-1}}{k'}\left(\chi_{p^{\ell^{N-e_p}f}, \ell^{N}}(\delta)^i J(\chi_{p^{\ell^{N-e_p}f}, \ell^{N}}^{ri}, \chi_{p^{\ell^{N-e_p}f}, \ell^{N}}^{si})T^{\ell^{N-e_p}f}\right)^{k'}.
	\end{align}

	Since $\ell$ is odd, from $\dsum_{\substack{k\geq 1\\ \ell\nmid k}} (-1)^{k-1}\dfrac{T^k}{k}=\log(1+T)-\dfrac{1}{\ell}\log(1+T^{\ell})$, we get
	\begin{align}
		&\log (L_{p}(T,{C_N}_{/\Q}))\\
		=&\sum_{0\leq j\leq N-e_p-1}\frac{1}{\ell^j f}\sum_{1\leq i\leq \ell^{e_p+j}-1} (\log\left(1+\chi_{p^{\ell^j f}, \ell^{e_p+j}}(\delta)^i J(\chi_{p^{\ell^j f}, \ell^{e_p+j}}^{ri}, \chi_{p^{\ell^j f}, \ell^{e_p+j}}^{si})T^{\ell^jf}\right)\\
		&-\frac{1}{\ell}\log\left(1+\left(\chi_{p^{\ell^j f}, \ell^{e_p+j}}(\delta)^i J(\chi_{p^{\ell^j f}, \ell^{e_p+j}}^{ri}, \chi_{p^{\ell^j f}, \ell^{e_p+j}}^{si})T^{\ell^jf}\right)^{\ell}\right))\\
		&+\frac{1}{\ell^{N-e_p} f}\sum_{1\leq i\leq \ell^{N}-1} \log\left(1+\chi_{p^{\ell^{N-e_p}f}, \ell^{N}}(\delta)^i J(\chi_{p^{\ell^{N-e_p}f}, \ell^{N}}^{ri}, \chi_{p^{\ell^{N-e_p}f}, \ell^{N}}^{si})T^{\ell^{N-e_p}f}\right).
	\end{align}

	From \Cref{norm compati}, we have $$\chi_{p^{\ell^{j+1} f}, \ell^{e_p+j+1}}(\delta)^{\ell}=\chi_{p^{\ell^j f}, \ell^{e_p+j}}(N_{\F_{p^{\ell^{j+1} f}}/\F_{p^{\ell^{j} f}}}(\delta))=\chi_{p^{\ell^j f}, \ell^{e_p+j}}(\delta)^{\ell}.$$ Also, since $\ell$ is odd, by the Hasse--Davenport relation and \Cref{norm compati}, we have
	\begin{align}
		&\left(\chi_{p^{\ell^j f}, \ell^{e_p+j}}(\delta)^i J(\chi_{p^{\ell^j f}, \ell^{e_p+j}}^{ri}, \chi_{p^{\ell^j f}, \ell^{e_p+j}}^{si})T^{\ell^jf}\right)^{\ell}\\
		=&\chi_{p^{\ell^{j+1} f}, \ell^{e_p+j+1}}(\delta)^{i\ell} J(\chi_{p^{\ell^{j+1} f}, \ell^{e_p+j+1}}^{r\ell i}, \chi_{p^{\ell^{j+1} f}, \ell^{e_p+j+1}}^{s\ell i})T^{\ell^{j+1}f}.
	\end{align}



	Thus, we obtain the following expression:
	\begin{align}
		&\log (L_{p}(T,{C_N}_{/\Q}))\\
		=&\frac{1}{f}\sum_{1\leq i\leq \ell^{e_p}-1} \log\left(1+\chi_{p^{f}, \ell^{e_p}}(\delta)^i J(\chi_{p^{f}, \ell^{e_p}}^{ri}, \chi_{p^{f}, \ell^{e_p}}^{si})T^{\ell^jf}\right)\\
		&+\sum_{1\leq j\leq N-e_p}\frac{1}{\ell^j f}\sum_{\substack{1\leq i\leq \ell^{e_p+j}-1\\ \ell\nmid i}} \log\left(1+\chi_{p^{\ell^j f}, \ell^{e_p+j}}(\delta)^i J(\chi_{p^{\ell^j f}, \ell^{e_p+j}}^{ri}, \chi_{p^{\ell^j f}, \ell^{e_p+j}}^{si})T^{\ell^jf}\right)\\

		=&\frac{1}{f}\sum_{1\leq j\leq e_p}\sum_{\substack{1\leq i\leq \ell^{j}-1\\ \ell\nmid i}} \log\left(1+\chi_{p^{f}, \ell^{j}}(\delta)^i J(\chi_{p^{f}, \ell^{j}}^{ri}, \chi_{p^{f}, \ell^{j}}^{si})T^{f}\right)\\
		&+\sum_{1\leq j\leq N-e_p}\frac{1}{\ell^j f}\sum_{\substack{1\leq i\leq \ell^{e_p+j}-1\\ \ell\nmid i}} \log\left(1+\chi_{p^{\ell^j f}, \ell^{e_p+j}}(\delta)^i J(\chi_{p^{\ell^j f}, \ell^{e_p+j}}^{ri}, \chi_{p^{\ell^j f}, \ell^{e_p+j}}^{si})T^{\ell^jf}\right)\\

		=&\sum_{1\leq j\leq N} \frac{1}{\ell^{N_j}f}\sum_{\substack{1\leq i\leq \ell^j-1\\ \ell\nmid i}} \log \left(1+\chi_{p^{\ell^{N_j}f},\ell^j}(\delta)^{i}J\left(\chi_{p^{\ell^{N_j}f},\ell^j}^{ri},\chi_{p^{\ell^{N_j}f},\ell^j}^{si}\right)T^{\ell^{N_j}f}\right)
	\end{align}

	From this, we have obtained the formula for the $L$-polynomial as claimed. Moreover, $\chi_{p^{\ell^{N_j} f}, \ell^j} = \leg{\cdot}{\mf{p}_j}_{\ell^j}$, and the order of the decomposition group of $\mf{p}_j$ is the relative degree of $\mf{p}_j$ over $p$, which is $\ell^{N_j} f$, thus we obtain the formula for the $L$-function as claimed.

	\textbf{Case (ii) $N < e_p$.}

	In this case, we have $t_k = N$ for any $k$. That is, $\F_{p^{kf}}$ has a character $\chi_{p^{kf}, \ell^N}$ of order $\ell^N$. Therefore, we have
	\begin{align}
		\log (L_{p}(T,{C_N}_{/\Q}))
	=&\sum_{k\geq 1} \sum_{1\leq i\leq \ell^{N}-1} \chi_{p^{kf}, \ell^{N}}(\delta)^i J(\chi_{p^{kf}, \ell^{N}}^{ri}, \chi_{p^{kf}, \ell^{N}}^{si})\frac{T^{kf}}{kf}\\
	=&\frac{1}{f} \sum_{1\leq i\leq \ell^{N}-1}\sum_{k\geq 1} \frac{(-1)^{k-1}}{k} \left(\chi_{p^{f}, \ell^{N}}(\delta)^i J(\chi_{p^{f}, \ell^{N}}^{ri}, \chi_{p^{f}, \ell^{N}}^{si})T^{f}\right)^k\\
	=&\frac{1}{f} \sum_{1\leq i\leq \ell^{N}-1} \log\left(1+\chi_{p^{f}, \ell^{N}}(\delta)^i J(\chi_{p^{f}, \ell^{N}}^{ri}, \chi_{p^{f}, \ell^{N}}^{si})T^{f}\right)\\
	=&\sum_{1\leq j\leq N} \frac{1}{f}\sum_{\substack{1\leq i\leq \ell^j-1\\ \ell\nmid i}} \log\left(1+\chi_{p^{f}, \ell^{j}}(\delta)^i J(\chi_{p^{f}, \ell^{j}}^{ri}, \chi_{p^{f}, \ell^{j}}^{si})T^{f}\right).
	\end{align}
    The second line follws from Hasse--Davenport's relation.
	Thus, since $j \leq N < e_p$, we have $N_j = 0$ and thus we have obtained the formula for the $L$-polynomial as claimed. Also, $\chi_{p^f, \ell^j} = \leg{\cdot}{\mf{p}_j}_{\ell^j}$, and the order of the decomposition group of $\mf{p}_j$ is $f$, regardless of $j$, therefore, we obtain the formula for the $L$-function as claimed.
	\end{proof}

  \section{A Hecke character associated to the new part of $\jac (C_N)$}\label{hecke character}
    
In this section, we identify the Hecke characters which correspond to the new part of $\jac (C_N)$ explicitly after seeing the results in the previous section.


Let $1 \leq i \leq N$ be fixed for the rest of the section. Let
$$
j^{(i)}_{r,s,t}(\mf{p}) := -J\left(\leg{\cdot}{\mf{p}}_{\ell^i}^r, \leg{\cdot}{\mf{p}}_{\ell^i}^s\right)
$$
for a prime ideal $\mf{p}$ of $K$ that does not divide $\ell$.

As shown by Weil \cite[Theorem]{weil1952jacobi}, this defines a Gr\"{o}ssencharacter $I_K(\ell) \to \C\x$. Its infinite part of is given by the set
$$
\Phi_i\inv := \left\{ h\inv \in (\Z/\ell^i\Z)\x \Bigg| \left\{\frac{rh}{\ell^i}\right\} + \left\{\frac{sh}{\ell^i}\right\} + \left\{\frac{th}{\ell^i}\right\} = 1 \right\}
$$
where $\{a\}:=a-\floor{a}$ denotes the fractional part of a real number $a$. Therefore, this induces the Hecke character $\A_K\x \to \C\x$ (cf. \cite[Proposition 4.7]{milne2011class}) which is also denoted by $j_{r,s,t}^{(i)}$.
A homomorphism
$$K^\times \to \C^\times; x\mapsto \prod_{\sigma\in \Phi_i\inv } \abs{x^{\sigma}}$$
induces a homomorphism
$$\Phi_i\inv :\A_K^\times \to \C^\times.$$
Then $\phi^{(i)} = j_{r,s,t}^{(i)} |\cdot|^{\frac{1}{2}}_{\A_K}(\Phi_i\inv)^{-2}$ gives the unitarization of $j_{r,s,t}^{(i)}$.

The Hecke character induced by the Gr\"{o}ssencharacter $\leg{\delta}{\cdot}_{\ell^i}: I_K(\delta) \to \C\x$ is denoted by $\chi_{\delta}^{(i)}$.  Let $\varphi^{(i)}_{\delta} = {\chi^{(i)}_{\delta}}^{r+s} j_{r,s,t}^{(i)}$ and let $\phi^{(i)}_{\delta} = {\chi^{(i)}_{\delta}}^{r+s} \phi^{(i)}$ be its unitarization. We remark that $\phi^{(i)}$ is unramified outside $\ell$ while $\phi^{(i)}_{\delta}$ is unramified outside $\delta \ell$. Then from \Cref{l function} and the Chebotarev density theorem,
$$L(s,C_N)=\prod_{1\leq i\leq N} L(s,\varphi^{(i)}_{\delta}).$$
Let us consider the quotient variety $\jac(C_i)^{\rm{new}}$ defined by
$$
\jac(C_i)^{\rm{new}} = \begin{cases}
    \jac(C_i)/\jac(C_{i-1}) & (1 < i \leq N) \\
    \jac(C_1) & (i = 1)
\end{cases}
$$
where the curves $C_i=C_i^{(\delta,r_i,s_i)}$ are defined in Introduction.
In this case, we have
$$
\jac(C_N) \stackrel{\Q}{\sim} \bigoplus_{1 \leq i \leq N} \jac(C_i)^{\rm{new}}.
$$
Since $[\zeta_{\ell^i}]\in \End(\jac(C_i)^{\rm{new}})$ where $[\zeta_{\ell^i}]$ denote the morphism induced by $\widetilde{C}^0_i\to \widetilde{C}^0_i;(x,y)\mapsto (\zeta_{\ell^i}x,y)$ and $2\dim \jac(C_i)^{\rm{new}} = [\Q(\zeta_{\ell^i}):\Q]$, $\jac(C_i)^{\rm{new}}$ is a CM abelian variety with CM field $\Q(\zeta_{\ell^i})$ (\cite[PROPOSITIION 3.3]{milne2006complex}), and it comes with Hecke characters.

Since $$L(s,\jac(C_N))=\prod_{1\leq i\leq N} L(s,\jac(C_i)^{\rm{new}})$$
and $L(s,\jac(C_1))=L(s,\varphi^{(1)}_{\delta})$ (\cite[2.1]{shu2021root}), we have $L(s,\jac(C_i)^{\rm{new}})=L(s,\varphi^{(i)}_{\delta})$ inductively. Therefore, a Hecke character associated to $\jac(C_i)^{\rm{new}}$ is $\varphi^{(i)}_{\delta}$ and it is unique up to conjugation by $\gal(K/\Q)$.

Therefore, we investigate the root number of $\phi^{(i)}_{\delta}$. We assume $i = N$ without loss of generality for the calculations. We also abbreviate $\phi = \phi^{(N)}$, $\phi_{\delta} = \phi^{(N)}_{\delta}$, and $\chi_{\delta} = \chi^{(N)}_{\delta}$.

\section{Root numbers for the Hecke characters}
In this section, we investigate the root number of the Hecke character $\phi_{\delta}$ which correspond to the new part of $\jac (C_N)$.
\subsection{\ Preparation for Root Numbers}
Let $E$ be a local field with a prime element $\pi_E$ and the normalized absolute value $\abs{\cdot}_E$. Let $\chi$ (resp. $\psi$) be a multiplicative (resp. additive) character of $E$ and $dx$ be the self dual Haar measure on $E$ with respect to $\psi$. 

We denote their conductor exponents by $f(\chi)$ and $N(\psi)$, respectively (i.e. they are the minimum integer $f$ (resp. $N$) such that $\chi|_{1+\pi_E^f}=1$ (resp. $\psi|_{\pi_E^N}=1$)).

In this paper, we choose $\psi$ as follows:

\begin{itemize}
	\item If $E$ is Archimedean, $\psi(x) = e^{2\pi i \Tr_{E/\R}(x)}$.\\
	\item If $E$ is a $p$-adic field, $\psi(x) = e^{-2\pi i \left\{\Tr_{E/\Q_p}(x)\right\}_p}$ where $\{a\}_p$ represents the principal part of the $p$-adic expansion of $a$.
\end{itemize}

Let $\epsilon(\chi, \psi,dx)$ be the epsilon factor of $\chi$ with respect to $\psi$ (see \cite[Theorem 7-2]{ramakrishnan2013fourier}). The local root number of $\chi$ with respect to $\psi$ is denoted by $W(\chi, \psi) := \epsilon(\abs{\cdot}_E^{1/2} \chi, \psi,dx)$. When $\psi$ is clear from the context, it is written simply as $W(\chi)$. For details on root numbers and epsilon factors, we refer to \cite[Chapter 7]{ramakrishnan2013fourier} and \cite[3.1]{shu2021root}.

In the following, we will compute the global root number
$$
W(\phi_{\delta}) = \prod_{p\leq \infty} \prod_{V \mid p} W(\phi_{\delta, V}),
$$
where the computation is carried out by calculating the products of local root number $W_p(\phi_{\delta}) := \prod_{V \mid p} W(\phi_{\delta, V})$ for each prime $p$.

In the next section, we will compute $W_p(\phi_{\delta})$ for $p \neq \ell$. For $p = \ell$, we have $W_{\ell}(\phi_{\delta}) = W(\phi_{\delta, \pi})$, and in this case, we will use Rohrlich's formula (\cite[Proposition 2]{rohrlich1992root}) for the computation.

\subsection{\ Calculation of $W_p(\phi_{\delta})$ when $p\neq \ell$}

Let $p$ be a prime of $\Q$ distinct from $\ell$.

\textbf{(a) Calculation of $W_{\infty}(\phi_{\delta})$.}

Since the infinite part of $\chi_{\delta}$ is trivial, we have
\begin{align}
    W_{\infty}(\phi_{\delta})
    = \prod_{V \mid \infty} W_{\infty}(\phi_V)
    = \prod_{V \mid \infty} i^{-1}
    = i^{-\frac{\ell^{N-1}(\ell-1)}{2}}.
\end{align}

\textbf{(b) Calculation of $W_p(\phi_{\delta})$ for $p \neq \infty$.}

Let $V$ be a prime of $K$ lying above $p$.
If $V \nmid \delta$, then $\phi_{\delta, V}$ is unramified, therefore, $W(\phi_{\delta, V}) = 1$. Then, it suffices to consider the case when $V$ is ramified (i.e. $V \mid \delta$).

Since $V \nmid \ell$, $\phi_V$ is unramified. Thus, by \cite[Chapter 7 Exercises 8 (c), p.301]{ramakrishnan2013fourier}, we have
$$
W(\phi_{\delta, V}) = \phi_V(\pi_V)^{-N(\psi) + f(\phi_{\delta, V})} W(\chi_{\delta, V}^{r+s}) = \phi_V(\pi_V) W(\chi_{\delta, V}^{r+s})
$$
where $\pi_V$ denotes a uniformizer of $K_V$. Let $v$ be the prime of $F$ lying below $V$. If we choose $\pi_V$ such that $\pi_V^2 \in F_v$, then $K_V = F_v(\pi_V)$ and $\chi_{\delta, V}|_{F_v^\times} = 1$. Then, by \cite[Theorem 3]{frohlich1973functional}, we have
$$
W(\chi_{\delta, V}^{r+s}) = \chi_{\delta, V}^{r+s}(\pi_V) = 1.
$$
Here, instead of directly determining $\phi_V(\pi_V)$, we compute the product $\prod_{V \mid p} \phi_V(\pi_V)$.
First we need the following lemma:
\begin{lemmaeigo}
	Let $a,b$ be integers such that $\ell\nmid a+b$. Let $\mf{p}\nmid \ell$ be a prime ideal of $K=\Q(\zeta)$. Then,
	\[
	J\left(\leg{\cdot}{\mf{p}}_{\ell^N}^a,\leg{\cdot}{\mf{p}}_{\ell^N}^b\right)\equiv -1\pmod{(1-\zeta)^2}.
	\]
	\end{lemmaeigo}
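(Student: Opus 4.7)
The plan is to estimate the Jacobi sum to first order in $1-\zeta$. Since $\chi:=\leg{\cdot}{\mf{p}}_{\ell^N}$ takes values in $\mu_{\ell^N}$, I will fix a group homomorphism $\alpha\colon\F_q^\times\to\Z/\ell^N\Z$ (with $q=\mc{N}\mf{p}$) satisfying $\chi(x)=\zeta^{\alpha(x)}$, and rewrite
$$J\left(\leg{\cdot}{\mf{p}}_{\ell^N}^a,\leg{\cdot}{\mf{p}}_{\ell^N}^b\right)=\sum_{x\in\F_q\setminus\{0,1\}}\zeta^{a\alpha(x)+b\alpha(1-x)}.$$
From the elementary congruence $\zeta^k\equiv 1-k(1-\zeta)\pmod{(1-\zeta)^2}$ valid for every $k\in\Z$, this reduces to
$$J\equiv(q-2)-(1-\zeta)\sum_{x\neq 0,1}\bigl(a\alpha(x)+b\alpha(1-x)\bigr)\pmod{(1-\zeta)^2},$$
and the claim splits into two independent estimates.

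For the constant part, the existence of a character of order $\ell^N$ on $\F_q^\times$ forces $\ell^N\mid q-1$. Since the ramification index of $\ell$ in $K$ is $\ell^{N-1}(\ell-1)\geq 2$, the element $\ell^N$ lies in the ideal $(1-\zeta)^2$, so $q-2\equiv -1\pmod{(1-\zeta)^2}$. For the linear part, it suffices to show that the inner integer sum is divisible by $\ell$. The substitution $y=1-x$ sends $\F_q\setminus\{0,1\}$ bijectively to itself, giving $\sum\alpha(1-x)=\sum\alpha(x)$, so the total sum collapses to $(a+b)\sum_{x\in\F_q^\times}\alpha(x)$ (using $\alpha(1)=0$). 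Fixing a generator $g$ of $\F_q^\times$ and using $\alpha(g^j)=j\alpha(g)$, this sum equals $\alpha(g)\cdot\tfrac{(q-1)(q-2)}{2}$, which I will show is divisible by $\ell^N$.

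The only technical wrinkle is that last divisibility $\ell^N\mid\tfrac{(q-1)(q-2)}{2}$: if $(q-1)/\ell^N$ is even it is immediate, and otherwise $q-1$ is odd, forcing $q$ to be even so that $q-2$ supplies the missing factor of $2$. Combining the two estimates yields $J\equiv -1\pmod{(1-\zeta)^2}$, as required. I note in passing that the hypothesis $\ell\nmid a+b$ is not actually used in the argument above; it ensures that $\chi^a\chi^b$ is nontrivial, which is relevant for the broader applications of the lemma but not for the congruence itself.
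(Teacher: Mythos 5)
Your proof is correct, but it follows a genuinely different route from the paper. The paper factors the Jacobi sum through Gauss sums via $J(\chi^a,\chi^b)=G(\chi^a)G(\chi^b)/G(\chi^{a+b})$ (this is where the hypothesis $\ell\nmid a+b$ enters, to keep $\chi^{a+b}$ nontrivial), computes $G(\chi)\equiv -1\pmod{1-\zeta}$, and then tracks the first-order terms of the three Gauss sums under the Galois action $G(\chi^a)=G(\chi)^{\tilde\sigma_a}$ inside $\Z[\zeta,\zeta_p]$. You instead expand each summand of $J$ directly, writing $\chi(x)=\zeta^{\alpha(x)}$ and using $\zeta^k\equiv 1-k(1-\zeta)\pmod{(1-\zeta)^2}$; this stays entirely in $\Z[\zeta]$, avoids Gauss sums and the auxiliary $p$-th roots of unity, and reduces the lemma to the two elementary facts $\ell^N\mid q-1$ and $\ell^N\mid\tfrac{(q-1)(q-2)}{2}$, both of which you verify correctly (note that $(1-\zeta)^2\supseteq(\ell)$ because $e(\ell)=\ell^{N-1}(\ell-1)\geq 2$, and that for a rational integer $S$ one has $(1-\zeta)\mid S$ iff $\ell\mid S$, which is exactly what your divisibility of the linear term delivers). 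The one point worth making explicit is the well-definedness of $k(1-\zeta)\bmod (1-\zeta)^2$ for $k\in\Z/\ell^N\Z$, which holds since $\ell^N(1-\zeta)\in(1-\zeta)^3$. Your closing observation is also accurate: your argument never uses $\ell\nmid a+b$, so you in fact prove a slightly stronger statement than the paper does; the hypothesis is only needed to make the paper's Gauss-sum factorization legitimate.
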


	\begin{proof}
	Let $\chi$ and $\psi$ denote a non trivial multiplicative character $\chi$ and an additive character $\psi$ of the finite field $\F_q=\mc{O}_K/\mf{p}$. The Gauss sum is defined as
	\[
	G(\chi,\psi):=\sum_{a\in\F_q\x} \chi(a)\psi(a).
	\]
        We fix $\psi$ as $\psi(a)=\zeta_p^{\Tr_{\F_q/\F_p}(a)}$ (where $p$ is the characteristic of $\F_q$) from now on. Then the Gauss sum $G(\chi,\psi)$ is simply denoted by $G(\chi)$.

	We set $\lambda=1-\zeta$. We note the well-known relationship between Gauss sums and Jacobi sums:
	\[
	J\left(\leg{\cdot}{\mf{p}}_{\ell^N}^a,\leg{\cdot}{\mf{p}}_{\ell^N}^b\right)
	=G\left(\leg{\cdot}{\mf{p}}_{\ell^N}^a\right)G\left(\leg{\cdot}{\mf{p}}_{\ell^N}^b\right)/G\left(\leg{\cdot}{\mf{p}}_{\ell^N}^{a+b}\right).
	\]

	We have
	\[
	G\left(\leg{\cdot}{\mf{p}}_{\ell^N}\right)
	\equiv \sum_{[a]\in \left(\mathcal{O}_K/\mf{p}\right)\x} 1\cdot \psi(a)
	=-\psi(0)
	=-1\ \pmod{\lambda},
	\]

	and we can write $G\left(\leg{\cdot}{\mf{p}}_{\ell^N}\right)=-1+\lambda z\ (z\in\Z[\zeta,\zeta_p])$. Here, let $\tilde{\sigma}_a\in\gal(\Q(\zeta,\zeta_p)/\Q)$ be defined by $\zeta_p\mapsto \zeta_p,\ \zeta\mapsto\zeta^a$. Then,
	\[
	G\left(\leg{\cdot}{\mf{p}}_{\ell^N}^a\right)
	=G\left(\leg{\cdot}{\mf{p}}_{\ell^N}\right)^{\tilde{\sigma}_a}
	=-1+\lambda^{\tilde{\sigma}_a} z^{\tilde{\sigma}_a}.
	\]

	Thus,
	\begin{align}
	    G\left(\leg{\cdot}{\mf{p}}_{\ell^N}^a\right)G\left(\leg{\cdot}{\mf{p}}_{\ell^N}^b\right)
	\equiv& 1-\lambda^{\tilde{\sigma}_a} z^{\tilde{\sigma}_a}-\lambda^{\tilde{\sigma}_b} z^{\tilde{\sigma}_b}\\
	=&1-(1-\zeta^a) z^{\tilde{\sigma}_a}-(1-\zeta^b)z^{\tilde{\sigma}_b}\ \pmod{\lambda^2}.
	\end{align}

	Moreover,
	\[
	G\left(\leg{\cdot}{\mf{p}}_{\ell^N}\right)^{\tilde{\sigma}_{a+b}}
	=-1+\lambda^{\tilde{\sigma}_{a+b}} z^{\tilde{\sigma}_{a+b}}
	=-1+(1-\zeta^{a+b}) z^{\tilde{\sigma}_{a+b}}.
	\]

	Therefore, it suffices to show that $(1-\zeta^a) z^{\tilde{\sigma}_a}+(1-\zeta^b)z^{\tilde{\sigma}_b}\equiv (1-\zeta^{a+b}) z^{\tilde{\sigma}_{a+b}}\ \pmod{\lambda^2}\ (z\in\Z[\zeta,\zeta_p])$. It is enough to verify this in two cases where $z\in\Z[\zeta_p]$ and $z=\zeta$. In the former case, $(1-\zeta^a)+(1-\zeta^b)-(1-\zeta^{a+b})=(1-\zeta^a)(1-\zeta^b)\equiv 0\ \pmod{\lambda^2}$. In the latter case, $(1-\zeta^a) z^{\tilde{\sigma}_a}+(1-\zeta^b)z^{\tilde{\sigma}_b}- (1-\zeta^{a+b}) z^{\tilde{\sigma}_{a+b}}=(1-\zeta^{2a})(1-\zeta^{2b})-(1-\zeta^a)(1-\zeta^b)\equiv 0\ \pmod{\lambda^2}$.
	\end{proof}

\begin{propositioneigo}
	Let $v\nmid \ell$ be a prime of $F$, and $V$ be a prime of $K$ above it. Let $c\in\gal(K/F)$ denote complex conjugation.

	(a) If $v$ is inert, then $\phi_V(\pi_V)=-1$.

	(b) If $v$ is split, then $\phi_V(\pi_V)\phi_{V^c}(\pi_{V^c})=1$.
	\end{propositioneigo}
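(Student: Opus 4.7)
The plan is to evaluate $\phi_V(\pi_V)$ directly from the unitarization formula $\phi = j_{r,s,t}^{(N)}\cdot|\cdot|_{\A_K}^{1/2}\cdot(\Phi_N^{-1})^{-2}$. Since $V\nmid \ell$, the local character $\phi_V$ is unramified and the archimedean correction $(\Phi_N^{-1})^{-2}$ contributes trivially at the finite place $V$, giving
\[
\phi_V(\pi_V)=-J(\chi^r,\chi^s)\cdot\mc{N}(V)^{-1/2}, \qquad \chi:=\leg{\cdot}{V}_{\ell^N}.
\]
Because $\ell\nmid r+s$, the character $\chi^{r+s}$ is nontrivial, so $|J(\chi^r,\chi^s)|=\mc{N}(V)^{1/2}$ and $\phi_V(\pi_V)$ already has modulus one. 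For part (b), complex conjugation $c$ swaps $V$ with $V^c$ and conjugates the residue symbol: a short check using $\leg{\alpha}{V^c}_{\ell^N}=\overline{\leg{c(\alpha)}{V}_{\ell^N}}$ yields $J_{V^c}(\chi'^r,\chi'^s)=\overline{J_V(\chi^r,\chi^s)}$ for $\chi'=\leg{\cdot}{V^c}_{\ell^N}$. Combined with $\mc{N}(V)=\mc{N}(V^c)=\mc{N}(v)$ this immediately gives
\[
\phi_V(\pi_V)\,\phi_{V^c}(\pi_{V^c})=\frac{J\bar J}{\mc{N}(v)}=\frac{|J|^2}{\mc{N}(v)}=1.
\]

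For part (a), write $q:=\mc{N}(v)$, so $\mc{N}(V)=q^2$. Inertness of $v$ is equivalent to $q\equiv -1\pmod{\ell^N}$ because the nontrivial element of $\gal(K_V/F_v)$ acts on $\mu_{\ell^N}\subset\F_{q^2}^\times$ simultaneously as the Frobenius $\zeta\mapsto\zeta^q$ and as complex conjugation $\zeta\mapsto\zeta^{-1}$, forcing these to coincide. The target $\phi_V(\pi_V)=-1$ is equivalent to $J(\chi^r,\chi^s)=q$, so I plan to show first that $J\in F$ (so that $|J|=q$ forces $J\in\{\pm q\}$), and then to use the preceding Jacobi sum lemma to eliminate $J=-q$. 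To see $J\in F=\R\cap K$, I partition the sum by the orbits of the Frobenius $a\mapsto a^q$ on the residue field $\F_{q^2}$. The fixed points are $\F_q\setminus\{0,1\}$; on these the identity
\[
\chi(a)=a^{(q^2-1)/\ell^N}=(a^{q-1})^{(q+1)/\ell^N}=1
\]
(whose exponent is an integer thanks to $\ell^N\mid q+1$) makes each fixed point contribute $1$, totalling $q-2$. Each two-element orbit $\{a,a^q\}$ contributes $\chi^r(a)\chi^s(1-a)+\chi^r(a)^{-1}\chi^s(1-a)^{-1}=2\re[\chi^r(a)\chi^s(1-a)]\in\R$, using $\chi(a^q)=\chi(a)^q=\chi(a)^{-1}$. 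Therefore $J\in\R\cap K=F$.

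Finally, the preceding lemma gives $J\equiv-1\pmod{(1-\zeta)^2}$. The case $J=q$ requires $q+1\equiv 0\pmod{(1-\zeta)^2}$, which holds because $\ell^N\mid q+1$ and $(1-\zeta)^{\ell-1}$ divides $\ell$ with $\ell-1\geq 2$; the alternative $J=-q$ would require $q\equiv 1\pmod{(1-\zeta)^2}$, forcing $\ell\mid q-1$ and contradicting $q\equiv -1\pmod\ell$. Thus $J=q$ and $\phi_V(\pi_V)=-q/q=-1$. The main obstacle is establishing the realness $J\in F$ in the inert case; it rests on the slightly hidden observation that $\chi|_{\F_q^\times}\equiv 1$ whenever $\ell^N\mid q+1$, which is what makes the fixed point contributions automatically rational and each non-fixed orbit a complex-conjugate pair. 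Once this is in hand, the orbit decomposition, the standard modulus identity $|J|=\sqrt{\mc{N}(V)}$, and the congruence from the preceding lemma combine without further combinatorial input.
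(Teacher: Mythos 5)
Your argument is correct, but it proves the proposition by a genuinely different route than the paper, most visibly in part (a). There the paper works ideal-theoretically: from $\mathfrak{p}_V^c=\mathfrak{p}_V$ and the fact that $\Phi_{r,s,t}$ represents $(\Z/\ell^N\Z)^\times/\{\pm1\}$ it deduces the ideal equality $(j_{r,s,t}(\mathfrak{p}_V))=(q_v)$, invokes Kronecker's theorem to write $j_{r,s,t}(\mathfrak{p}_V)=uq_v$ with $u$ a root of unity, and then pins down $u=-1$ (rather tersely) via $q_v\equiv-1\pmod\ell$ and the Jacobi-sum congruence. You instead prove directly that $J\in F$ by decomposing the sum over Frobenius orbits of $\F_{q^2}$, using the key (and correct) observation that $\chi|_{\F_q^\times}=1$ because $\ell^N\mid q+1$, so that fixed points contribute $1$ and two-element orbits contribute $2\re[\chi^r(a)\chi^s(1-a)]$; combined with $\abs{J}=q$ this gives $J=\pm q$, and the congruence $J\equiv-1\pmod{(1-\zeta)^2}$ from the preceding lemma eliminates $J=-q$ exactly as you say. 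This avoids both the prime factorization of the Jacobi sum and Kronecker's theorem, at the cost of the explicit orbit computation; both routes ultimately lean on the same congruence lemma to fix the sign. In part (b) your identity $J_{V^c}(\chi'^r,\chi'^s)=\overline{J_V(\chi^r,\chi^s)}$ together with $\abs{J}^2=\mc{N}(v)$ is cleaner and more transparent than the paper's congruence-mod-$\mathfrak{p}_V$ argument. Two small points worth stating explicitly: the modulus identity $\abs{J}=\mc{N}(V)^{1/2}$ needs $\chi^r$ and $\chi^s$ nontrivial as well as $\chi^{r+s}$ (all three follow from $\ell\nmid rst$ and $r+s+t=\ell^N$), and in the orbit argument one should note that $a\mapsto a^q$ preserves $\{0,1\}$ so each two-element orbit lies entirely inside the summation range.
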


	\begin{proof}
	(a) In this case, we have $\mf{p}_{V}^c=\mf{p}_{V}$. Here, $\Phi_{r,s,t}$ is a representative system of $(\Z/\ell^N\Z)\x/\set{\pm 1}$, thus
$$\left(\mf{p}_{V}^{\Phi_{r,s,t}}\right)^2=\mf{p}_{V}^{\Phi_{r,s,t}+c\Phi_{r,s,t}}=\mf{p}_{V}^{\sum_{\sigma\in\gal(K/\Q)}\sigma}=\prod_{[\sigma]\in\gal(K/\Q)/D_V}{\mf{p}_{V}^{\sigma}}^{\# D_V}=(\mc{N}\mf{p}_{V})=(q_v)^2$$
	where $\mc{N}$ denotes the absolute norm and $q_v:=\mc{N}\mf{p}_{v}$. Thus, by the uniqueness of the prime ideal factorization, we have $(j_{r,s,t}(\mf{p}_{V}))=\mf{p}_{V}^{\Phi_{r,s,t}}=(q_{v})$, therefore, we can write $j_{r,s,t}(\mf{p}_{V})=uq_v\ (u\in\mc{O}_K\x)$. Since the absolute value of $u$ is $1$ for any complex conjugate, by Kronecker's theorem, it is a root of unity. Now, since the inertia degree of $v$ is $2$, we have $q_v\equiv -1\pmod{\ell}$.
	 Therefore, since $u=-1$, we have
	$$\Phi_{V}(\pi_V)=j_{r,s,t}(\mf{p}_{V})\abs{\pi_V}_V^{\frac{1}{2}}=-1.$$
	(b) In this case, we note that \(\mc{O}_F/\mf{p}_v = \mc{O}_K/\mf{p}_V\). By definition, we have
\[
\phi(\pi_V) \equiv -\sum_{x \in \mc{O}_K/\mf{p}_V \atop x \neq 0, 1} (x^r (1-x)^s)^{\frac{q_V-1}{\ell}} \pmod{\mf{p}_V}.
\]
Thus, \(\phi_{V^c}(\pi_{V^c}) \equiv \phi_V(\pi_V) \pmod{\mf{p}_V}\). Since both sides are roots of unity, we can conclude that \(\phi_{V^c}(\pi_{V^c}) = \phi_V(\pi_V)\).

\end{proof}

\begin{propositioneigo}
    \[
    \prod_{V \mid p} \phi_V(\pi_V) = \leg{p}{\ell}.
    \]
\end{propositioneigo}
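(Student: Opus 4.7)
The plan is to reduce the product to a sign count using the two cases of the previous proposition, and then match that sign against $\leg{p}{\ell}$ by an elementary parity computation.

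First I would observe that whether case (a) or case (b) of the previous proposition applies is uniform across all primes $v$ of $F$ above $p$: since $\gal(K/\Q) \cong (\Z/\ell^N\Z)^\times$ is cyclic and complex conjugation corresponds to $-1$, the unique element of order $2$, the Frobenius subgroup $\langle p \rangle$ contains $-1$ if and only if the order $d$ of $p$ in $(\Z/\ell^N\Z)^\times$ is even. The lemma earlier expresses $d$ as $f_p$ times a power of the odd prime $\ell$, so this is equivalent to $f_p$ being even. Hence every $v \mid p$ is inert in $K/F$ when $f_p$ is even, and every $v$ is split when $f_p$ is odd.

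In the split case ($f_p$ odd), I would pair the primes of $K$ as $\{V, V^c\}$ and apply part (b), so each pair contributes $1$ and the total product is $1$; on the arithmetic side, $f_p \mid \ell-1$ with $f_p$ odd forces $f_p \mid (\ell-1)/2$, which gives $\leg{p}{\ell} = 1$. In the inert case ($f_p$ even), part (a) gives $-1$ for each of the $\ell^{N-1}(\ell-1)/d$ primes of $K$ above $p$, and since $\ell$ is odd this exponent has the same parity as $(\ell-1)/f_p$, so the product equals $(-1)^{(\ell-1)/f_p}$.

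The final step, which is essentially the only nontrivial calculation, is to verify that $(-1)^{(\ell-1)/f_p} = \leg{p}{\ell}$ when $f_p$ is even. Writing $\ell-1 = 2^a m$ and $f_p = 2^b m'$ with $m, m'$ odd and $1 \le b \le a$, the quotient $(\ell-1)/f_p = 2^{a-b}(m/m')$ is odd precisely when $a = b$; this is the classical $2$-adic criterion for $p$ to be a non-residue modulo $\ell$, so the sign matches. I do not anticipate any real obstacle: the argument is bookkeeping of cyclic-group orders combined with the two-case input of the preceding proposition.
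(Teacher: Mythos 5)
Your proof is correct and follows essentially the same route as the paper: both arguments combine the split/inert dichotomy of the preceding proposition with an elementary parity count of the primes above $p$. The only difference is organizational — you determine the splitting type explicitly from the parity of the order of $p$ modulo $\ell$ and then match the sign $2$-adically, whereas the paper cases on the value of $\leg{p}{\ell}$ and deduces the parity of the number of primes of $K$ above $p$; the underlying computation is identical.
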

\begin{proof}
    Let $f, f'$, and  $f_1$ be the relative degrees of \(p\) in $K, F$, and $\Q(\zeta_{\ell})$ respectively. Similarly let $g, g'$, and $g_1$ be the number of prime ideals lying above \(p\) in these respective fields.
    Since \(g_1 \mid g\) and \(f = f_1 \cdot \text{(power of \(\ell\))}\), \(g_1\) and \(g\) have the same parity. Furthermore, since both \(K/\Q\) and \(F/\Q\) are Galois, the prime ideals of \(F\) lying above \(p\) either (a) all split or (b) all remain inert.

    \textbf{Case \(\leg{p}{\ell} = 1\).}

    Since \(1 = \leg{p}{\ell} \equiv p^{\frac{\ell-1}{2}} \pmod{\ell}\), it follows that \(f_1 \mid \frac{\ell-1}{2}\). Hence, \(2 \mid g_1\), therefore, \(2 \mid g\).
    In case (a), \(\prod_{V \mid p} \phi_V(\pi_V) = 1^{g'} = 1\).
    In case (b), since \(g' = g\), we have \(\prod_{V \mid p} \phi_V(\pi_V) = (-1)^{g'} = 1\).

    \textbf{Case \(\leg{p}{\ell} = -1\).}
    
    Since \(2 \nmid g\), only (b) can occur. In this case, \(\prod_{V \mid p} \phi_V(\pi_V) = (-1)^g = -1\).
\end{proof}

From the above, we obtain the following:

\begin{propositioneigo}
    \[
    W_p(\phi_{\delta}) =
    \begin{dcases}
        i^{-\frac{\ell^{N-1}(\ell-1)}{2}} & (p = \infty) \\
        1 & (p \nmid \delta) \\
        \leg{p}{\ell} & (p \mid \delta).
    \end{dcases}
    \]
\end{propositioneigo}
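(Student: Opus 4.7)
The statement collects the three cases computed separately through this subsection, so the proof is essentially a bookkeeping step that assembles the partial results. The plan is to treat the archimedean case, the unramified finite case, and the ramified finite case independently, citing the preceding computations for each.

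First, for $p=\infty$: this is exactly what is done in part (a) at the start of the subsection. Since the infinite part of $\chi_{\delta}$ is trivial, each archimedean place $V$ contributes the same factor $i^{-1}$ as in the case $\delta=1$, and there are $\tfrac{\ell^{N-1}(\ell-1)}{2}$ archimedean places of $K$, giving the value $i^{-\ell^{N-1}(\ell-1)/2}$.

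Next, for a finite $p\neq\ell$ with $p\nmid\delta$: every prime $V\mid p$ of $K$ satisfies $V\nmid\delta\ell$, so both $\chi_{\delta,V}$ and $\phi_V$ (hence $\phi_{\delta,V}=\chi_{\delta,V}^{r+s}\phi_V$) are unramified. For an unramified character twisted by $|\cdot|^{1/2}$, the local root number $W(\phi_{\delta,V})$ is $1$ (in the normalization of \cite[Chap.~7]{ramakrishnan2013fourier}), so the product over $V\mid p$ is $1$.

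Finally, for a finite $p\mid\delta$ (so $p\neq\ell$): apply the reduction already carried out in part (b). For each $V\mid p$, since $V\nmid\ell$, $\phi_V$ is unramified, and the formula from \cite[Chap.~7, Exercises 8 (c)]{ramakrishnan2013fourier} together with $-N(\psi)+f(\phi_{\delta,V})=1$ yields
\[
W(\phi_{\delta,V})=\phi_V(\pi_V)\,W(\chi^{r+s}_{\delta,V}).
\]
By choosing the uniformizer $\pi_V$ so that $\pi_V^2\in F_v$ and using $\chi_{\delta,V}|_{F_v^\times}=1$, Fr\"ohlich's theorem \cite[Theorem 3]{frohlich1973functional} gives $W(\chi^{r+s}_{\delta,V})=\chi^{r+s}_{\delta,V}(\pi_V)=1$. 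Thus the contribution from $p$ reduces to $\prod_{V\mid p}\phi_V(\pi_V)$, and by the previous proposition this product equals $\left(\tfrac{p}{\ell}\right)$.

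Assembling these three computations immediately yields the piecewise formula for $W_p(\phi_\delta)$. The only step with any substance is part (b), but it is already carried out in the preceding paragraphs and propositions; the final statement is essentially a tabulation. There is no serious obstacle here, so the write-up consists only of packaging the three cases and pointing to the appropriate previous results.
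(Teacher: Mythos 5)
Your proposal is correct and matches the paper exactly: the paper also derives this proposition purely as a tabulation of the preceding computations, namely part (a) for $p=\infty$, the triviality of $W(\phi_{\delta,V})$ for unramified $V$, and the reduction via \cite[Chapter 7 Exercises 8 (c)]{ramakrishnan2013fourier} and \cite[Theorem 3]{frohlich1973functional} to $\prod_{V\mid p}\phi_V(\pi_V)=\leg{p}{\ell}$ for $p\mid\delta$. No further comment is needed.
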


\subsection{\ Calculation of $W_{\ell}(\phi_{\delta})$}

Let $\eta$ be the character of $\A_F^\times$ corresponding to the extension $K/F$ via class field theory. Then, by \cite[Proposition 1]{rohrlich1982}, we have $\phi_{\delta}|_{\A_F} = \eta$. Thus, it suffices to compute $W(\eta_{\pi_F})$ and the relative root number $W(\phi_{\delta,\pi}, \eta_{\pi_F}) := W(\phi_{\delta,\pi}) / W(\eta_{\pi_F})$.

\subsubsection{\ Calculation of $W(\eta_{\pi_F})$.}\label{eta}

From \cite[Theorem 1]{frohlich1973functional}, we know that $W(\eta) = 1$, thus we have
\[
W(\eta_{\pi_F}) = \left(\prod_{V \mid \infty} i^{-1} \right)^{-1} = i^{\frac{\ell^{N-1}(\ell-1)}{2}}.
\]

\subsubsection{\ Calculation of the relative root number $W(\phi_{\delta,\pi}, \eta_{\pi_F})$.}

Let $f_{\delta,\pi}$ be the conductor exponent of $\chi_{\delta,\pi}$.
We will compute the relative root number $W(\phi_{\delta,\pi}, \eta_{\pi_F})$ by using Rohrlich's formula (\cite[Proposition 2]{rohrlich1992root}). To do it, we first have to calculate $\chi_{\delta,\pi}(1 + \pi^{f_{\delta,\pi}-1})$.

Let $\Delta \geq 1$ be an $\ell^N$-th power-free integer, satisfying either $\ord_{\ell}(\Delta) = 0$ or $\ell \nmid \ord_{\ell}(\Delta)$.

We express $\Delta$ as $\Delta = \epsilon \ell^b(1 + c)$ where $\epsilon \in \mu_{\ell-1}(\Q_\ell)$, $b \in \Z$, and $c \in \ell \Z_\ell$ and let
\[
w(\Delta) := \min\{\ord_\ell(b), \ord_\ell(c)\}.
\]
We write $f(\Delta)$ for the conductor exponent of the Hilbert symbol $(\cdot, \Delta)_{\ell^N}$.

\begin{propositioneigo}\label{conductor}
    For $\Delta \in \Q_\ell^\times$, we express $\Delta = \epsilon \ell^b(1 + c)$ with $\epsilon \in \mu_{\ell-1}(\Q_\ell)$, $b \in \Z$, and $c \in \ell \Z_\ell$, let $w = w(\Delta)$. Then,
    \[
    f(\Delta) =
    \begin{dcases}
        \ell^{N-1}(\ell+1) & (w = 0) \\
        2\ell^{N-w} & (1 \leq w < N,\ \ord_\ell(b + c) = w) \\
        \ell^{N-w-1}(\ell-1) & (1 \leq w < N,\ \ord_\ell(b + c) > w) \\
        2 & (w = N = \ord_\ell(c)) \\
        0 & (\text{$w > N$ or $w = N \neq \ord_\ell(c)$}).
    \end{dcases}
    \]
\end{propositioneigo}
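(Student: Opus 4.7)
The plan is to invoke Sharifi's explicit formula \cite[Theorem 8]{sharifi2001norm} for the conductor of the $\ell^N$-th Hilbert symbol, which expresses $f(a)$ in terms of the filtration position of $a$ in the local field $K_\pi = \Q_\ell(\zeta_{\ell^N})$ modulo $\ell^N$-th powers. Because $\gcd(\ell-1,\ell^N)=1$, every $\epsilon\in\mu_{\ell-1}(\Q_\ell)$ is automatically an $\ell^N$-th power in $\Q_\ell^\times$, hence in $K_\pi^\times$; so $(\cdot,\epsilon)_{\ell^N}$ is trivial and one reduces at once to $f(a)=f(\ell^b(1+c))$.

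The next step is to read off the filtration data for $\ell^b(1+c)$. Since $K_\pi/\Q_\ell$ is totally ramified of degree $e=\ell^{N-1}(\ell-1)$, we have $\ord_\pi(\ell)=e$, so $1+c$ lies in $1+\pi^{we}\mathcal{O}_{K_\pi}$ precisely when $c\in\ell^w\Z_\ell\setminus\ell^{w+1}\Z_\ell$. Under the $\ell$-adic logarithm, the leading term of $\log(\ell^b(1+c)) = b\log(\ell) + \log(1+c)$ is controlled by $b$ and $c$ together. This shows that the first filtration step at which $a$ becomes nontrivial modulo $\ell^N$-th powers is detected by $w=\min\{\ord_\ell(b),\ord_\ell(c)\}$, and that when $w\ge 1$ there is a possibility of cancellation between $b\log(\ell)$ and $c$, measured precisely by $\ord_\ell(b+c)$.

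The case analysis then follows the subdivision in the statement. When $w=0$, since $c\in\ell\Z_\ell$ this forces $\ell\nmid b$, so the uniformizer part dominates and Sharifi's formula yields the maximal conductor $\ell^{N-1}(\ell+1)$. When $1\le w<N$, Sharifi's formula splits according to whether $\ord_\ell(b+c)=w$ (no cancellation, conductor $2\ell^{N-w}$, the ``ramified step'') or $\ord_\ell(b+c)>w$ (cancellation, conductor $\ell^{N-w-1}(\ell-1)$, the ``unramified step''). When $w=N$ with $\ord_\ell(c)=N$ exactly, a tame residual character of conductor $2$ survives; otherwise $\ord_\ell(c)>N$ forces $\ord_\ell(b)=N$ as well, and $\ell^b(1+c)$ becomes a genuine $\ell^N$-th power, so $f(a)=0$. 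Finally, $w>N$ makes $a$ outright an $\ell^N$-th power modulo units, and $f(a)=0$.

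The main obstacle will be the intermediate range $1\le w<N$, where matching Sharifi's invariant to the pair $(b,c)$ requires carefully tracking the cancellation inside $\log(\ell^b(1+c))$; the test $\ord_\ell(b+c)>w$ is exactly what detects this cancellation, and it produces the drop from $2\ell^{N-w}$ to $\ell^{N-w-1}(\ell-1)$. Verifying that Sharifi's formula translates cleanly into the stated arithmetic conditions on $b$ and $c$ is the only step I expect to require substantial technical care; the remaining cases are essentially degenerate limits of the intermediate case.
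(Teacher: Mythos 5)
Your proposal is correct and follows essentially the same route as the paper: the paper's proof is precisely the observation that, after discarding the tame unit $\epsilon$ (an $\ell^N$-th power since $\gcd(\ell-1,\ell^N)=1$), the stated case division is a direct transcription of Sharifi's Theorem 8 for $\Q_\ell(\zeta_{\ell^N})$, using $\ord_\pi(1-\zeta_{\ell^i})=\ell^{N-i}$ to place the jumps of the unit filtration. Your additional logarithm heuristic is only motivational and not needed once the citation is in place.
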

\begin{proof}
    Since $\ord_\pi(1 - \zeta_{\ell^i}) = \ell^{N-i}$ for $1 \leq i \leq N$, the result follows from \cite[Theorem 8]{sharifi2001norm}.
\end{proof}

We have $f_{\Delta,\pi} = f(\Delta)$ from the following proposition:

\begin{propositioneigo}
    For each prime $V$ of $K$,
    \[
    \chi_{\Delta,V} = (\cdot, \Delta)_{K_V, \ell^N}.
    \]
\end{propositioneigo}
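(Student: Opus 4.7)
The plan is to build a global Hecke character from local Hilbert symbols and identify it with $\chi_\delta$ by uniqueness of the idelic extension of an ideal character. Define
\[
\eta \colon \A_K^\times \to \mu_{\ell^N}, \qquad \eta(x) := \prod_V (x_V, \delta)_{K_V, \ell^N}.
\]
This product is well-defined: at each prime $V \nmid \delta\ell$ where $x_V \in \mathcal{O}_V^\times$ the local Hilbert symbol is trivial, so only finitely many factors differ from $1$. By Artin reciprocity applied to the Kummer extension $K(\delta^{1/\ell^N})/K$, or equivalently the global product formula for Hilbert symbols, we have $\prod_V (\alpha,\delta)_{K_V,\ell^N} = 1$ for every $\alpha \in K^\times$, so $\eta|_{K^\times} = 1$ and $\eta$ descends to a finite-order idele class character.

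Next I would compare $\eta$ with $\chi_\delta$ at every place. For $V \nmid \delta\ell$ the Hilbert symbol $(\cdot,\delta)_{K_V,\ell^N}$ is unramified (trivial on $\mathcal{O}_V^\times$ since $\delta$ is a unit), and the classical tame symbol computation gives, for a uniformizer $\pi_V$,
\[
\eta_V(\pi_V) = (\pi_V, \delta)_{K_V, \ell^N} = \leg{\delta}{V}_{\ell^N} = \chi_{\delta,V}(\pi_V),
\]
matching the prescribed ideal character of $\chi_\delta$. At the archimedean places $V$ of $K$, which are all complex, both $\eta_V$ and $\chi_{\delta,V}$ are trivial: $\chi_\delta$ is of finite order with trivial infinity type, and there is no nontrivial Hilbert symbol on $\C^\times$.

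Finally, I would invoke uniqueness to conclude $\eta = \chi_\delta$ as idele class characters, which yields $\eta_V = \chi_{\delta,V}$ simultaneously at every prime $V$, including the ramified ones $V \mid \delta\ell$. Explicitly, the ratio $\eta/\chi_\delta$ is a finite-order Hecke character that is unramified outside $\delta\ell$, trivial at all archimedean places, and whose induced ideal character on the free abelian group generated by prime ideals coprime to $\delta\ell$ is trivial by the preceding paragraph; the standard exact sequence relating the ray class group modulo a modulus supported on $\delta\ell\infty$ to the local ramified components forces such a character to be the identity.

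The main technical point is the step-three uniqueness argument: one must verify that two finite-order Hecke characters agreeing at all unramified finite places and at infinity must coincide globally, and separately pin down the correct sign conventions matching the tame Hilbert symbol to the power residue symbol $\leg{\delta}{V}_{\ell^N}$. Both are classical, so this proposition is essentially a direct application of Artin reciprocity rather than a new computation; the real work in the paper lies in the subsequent calculation of $W(\phi_{\delta,\pi}, \eta_{\pi_F})$ via Rohrlich's formula using the Hilbert symbol presentation established here.
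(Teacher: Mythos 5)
Your proposal is correct and rests on the same mathematics as the paper's one-line proof, which simply cites Neukirch (Chapter V, Proposition 3.1) for the identification of the Hilbert symbol with the local norm residue symbol of the Kummer extension $K(\delta^{1/\ell^N})/K$; combined with global reciprocity this is exactly your product-formula-plus-uniqueness argument, spelled out. The only point to be careful about in practice is the one you already flag: fixing the argument order and sign convention so that the tame symbol $(\pi_V,\delta)_{K_V,\ell^N}$ comes out as $\leg{\delta}{V}_{\ell^N}$ rather than its inverse.
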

\begin{proof}
    This follows from \cite[Chapter V, Proposition 3.1]{neukirch2013algebraic}.
\end{proof}

Let $\Delta = \epsilon \ell^b(1 + c)$ with $\epsilon \in \mu_{\ell-1}(\Q_\ell)$, $0 \leq b$, and $c \in \ell \Z_\ell$. We write $w = w(\Delta) = \min\{\ord_\ell(b), \ord_\ell(c)\}$. Then since $b=0$ or $\ell\nmid b$,
\[
w =
\begin{cases}
    \ord_\ell(c) & (b = 0) \\
    0 & (b \neq 0)
\end{cases}
\]
since $\ord_{\ell}(\Delta) = 0$ or $\ell \nmid \ord_{\ell}(\Delta)$. 

In the following, we will calculate $\chi_{\Delta,\pi}(1 + \pi^{f-1})$. First, we calculate the following Hilbert symbol:

\begin{propositioneigo}\label{1+c}
     Let $f>1$ be an integer and let $c\in\ell\Z_{\ell}$. Then the following  holds: 
    $$[1+\pi^{f-1},1+c]_{\ell^N} 
	\equiv (1-f)\frac{2c}{\ell}\cdot J(N,f)\cdot \sum_{1\leq k\leq N} (-1)^{k+1}\frac{c^{k-1}}{k} \pmod{\ell^N}.$$
\end{propositioneigo}
\begin{proof}
Since $\ord_{\ell}(c) \geq 1$ and $\ell > 2$, we have
$$\ord_{\pi}(c) = \ord_{\ell}(c)\ell^{N-1}(\ell-1) \geq 2\ell^{N-1}.$$

Therefore, the Artin-Hasse-Iwasawa formula (\cite[Chapter V, Proposition 3.8]{neukirch2013algebraic}) can be applied, and we have
\begin{flalign}
	&[1+\pi^{f-1},1+c]_{\ell^N}&\\
	&\equiv -\frac{1}{\ell^N}\Tr_{\Q_{\ell}(\zeta)/\Q_{\ell}}\left(\zeta\log(1+c)D\log (1+\pi^{f-1})\right)&\\
	&\equiv -\frac{1}{\ell^N}\log(1+c)\Tr_{\Q_{\ell}(\zeta)/\Q_{\ell}}\left((1-\pi')\frac{(f-1)\pi^{f-2}}{1+\pi^{f-1}}\frac{d\pi}{d\pi'}\right)&\\
	&\equiv -\frac{f-1}{\ell^{N-1}}\frac{c}{\ell}\Tr_{\Q_{\ell}(\zeta)/\Q_{\ell}}\left((1-\pi')\frac{\pi^{f-2}}{1+\pi^{f-1}}(-2+\frac{d(\pi'{\pi'}^c)}{d\pi'})\right)\cdot \sum_{1\leq k\leq N} (-1)^{k+1}\frac{c^{k-1}}{k} \pmod{\ell^{N}}.
\end{flalign}
The second line from the bottom follows from $\pi=-2\pi'+\pi'{\pi'}^c$.
Here, since the different of $\Q_{\ell}(\zeta)/\Q_{\ell}$ is $\mf{d}_{\Q_{\ell}(\zeta)/\Q_{\ell}} = (\pi')^d$ where $d = \ell^{N-1}(\ell N-N-1)$,\ $\ell^{2N-1} \mid \Tr_{\Q_{\ell}(\zeta)/\Q_{\ell}}({\pi'}^m a)\ (\text{for all } a \in \Z_{\ell}[\zeta])$ is equivalent to $m \geq  \ord_{\pi'}({\pi'}^{-d}\ell^{2N-1}) 
	=  \ell^{N-1}(\ell N-N-\ell+2)$.
	
Thus, letting $M = \ell^{N-1}(\ell N-N-\ell+2)$, we can evaluate the inside of the trace modulo ${\pi'}^M$ as follows:
\begin{align}
	\pi'{\pi'}^c = -(1-\pi')^{-1} {\pi'}^2
	\equiv & -(2\pi' + 3{\pi'}^2 + 4{\pi'}^3 + \cdots + M{\pi'}^{M-1}) \\
	= & -\left(\frac{M{\pi'}^{M+1} - (M+1){\pi'}^M + 1}{(1-\pi')^2} - 1\right) \\
	\equiv & -\frac{1}{(1-\pi')^2} + 1 \pmod{{\pi'}^M}.
\end{align}

Thus, we have
\begin{align}
	&[1+\pi^{f-1},1+c]_{\ell^N} \\
	\equiv &(1-f)\frac{c}{\ell}\cdot \frac{1}{\ell^{N-1}}
	\Tr_{\Q_{\ell}(\zeta)/\Q_{\ell}}\left(\frac{\pi^{f-2}}{1+\pi^{f-1}}(\pi'{\pi'}^c - 2)\right)\cdot \sum_{1\leq k\leq N} (-1)^{k+1}\frac{c^{k-1}}{k}  \pmod{\ell^N}.
\end{align}

In the $\ell$-adic topology, we have
\begin{align}
	\frac{\pi^{f-2}}{1+\pi^{f-1}}(\pi'{\pi'}^c - 2)
	= & \frac{\pi^{f-2}}{1+\pi^{f-1}}(-\zeta - \zeta^{-1}) \\
	= & -\sum_{i \geq 0} (-1)^i \pi^{(f-1)i + f-2} (\zeta + \zeta^{-1}).
\end{align}
Therefore, we need to compute the trace of $S_i := \pi^{(f-1)i + f-2} (\zeta + \zeta^{-1})$.
Let $P_n = \zeta^n + \zeta^{-n}$ and $Q_n = \zeta^n - \zeta^{-n}$ for $n \in \Z$. Then, the following holds for $n\geq 1$:
	$$
	\pi^n (\zeta + \zeta^{-1}) =
	\begin{dcases}
		\sum_{k=0}^{n-1} (-1)^k \binom{n-1}{k} P_{n+1-2k} & (2 \mid n) \\
		\sum_{k=0}^{n-1} (-1)^k \binom{n-1}{k} Q_{n+1-2k} & (2 \nmid n).
	\end{dcases}
	$$
	This follows inductively from the relations $(\zeta - \zeta^{-1})P_n = Q_{n+1} - Q_{n-1}$ and $(\zeta - \zeta^{-1})Q_n = P_{n+1} - P_{n-1}$, together with Pascal's identity $\binom{n-1}{k} + \binom{n-1}{k+1} = \binom{n}{k+1}$.

We note again that
$$
\Tr_{\Q_{\ell}(\zeta)/\Q_{\ell}}(\zeta^n) =
\begin{dcases}
	\ell^N - \ell^{N-1} & (\ell^N \mid n) \\
	-\ell^{N-1} & (\ell^{N-1} \| n) \\
	0 & (\text{otherwise})
\end{dcases}
$$
and therefore, $\Tr_{\Q_{\ell}(\zeta)/\Q_{\ell}}(P_n) = 2\Tr_{\Q_{\ell}(\zeta)/\Q_{\ell}}(\zeta^n)$, while $\Tr_{\Q_{\ell}(\zeta)/\Q_{\ell}}(Q_n) = 0$. Thus, for odd $i$, we have $\Tr_{\Q_{\ell}(\zeta)/\Q_{\ell}}(S_i) = 0$.
What remains is the case when $i$ is even. In this case, we have
$$
\Tr_{\Q_{\ell}(\zeta)/\Q_{\ell}}(S_i) =
\sum_{k=0}^{(f-1)(i+1)-2} (-1)^k \binom{(f-1)(i+1)-2}{k}
\Tr_{\Q_{\ell}(\zeta)/\Q_{\ell}}(P_{(f-1)(i+1)-2k}).
$$

\begin{itemize}[leftmargin=0pt]
    \item \textbf{Case (a) $ (f-1)(i+1)<\ell^{N-1}$.}
$$\Tr_{\Q_{\ell}(\zeta)/\Q_{\ell}}(S_i)=0.$$

\item \textbf{Case (b) $\ell^{N-1}\leq  (f-1)(i+1)<\ell^{N}$.}

Let $\ell^{N-1}d$ denote the largest number of the number of the form $\ell^{N-1}d'\ (2\nmid d')$ that is less than or equal to $ (f-1)(i+1)$, and set $v_0=\frac{1}{2}\left((f-1)(i+1)-\ell^{N}d\right)$. Then,
$$\Tr_{\Q_{\ell}(\zeta)/\Q_{\ell}}\left(S_i\right)
=(-2\ell^{N-1})\sum_{k'\in\Z_{\geq 0}\atop k'\equiv v_0 \pmod{\ell^{N-1}}} (-1)^{k'} \binom{ (f-1)(i+1)-2}{k'}.$$

\item \textbf{Case (c) $\ell^N\leq  (f-1)(i+1)$.}

Let $\ell^{N}e$ denote the largest number of the number of the form $\ell^{N}d_1'\ (2\nmid d_1')$ that is less than or equal to $ (f-1)(i+1)$, and set $v_1=\frac{1}{2}\left((f-1)(i+1)-\ell^{N}d_1\right)$.
Let $\ell^{N-1}d_2$ denote the largest number of the number of the form $\ell^{N-1}d_2'\ (2\nmid d_2')$ that is less than or equal to $ (f-1)(i+1)$, and set $v_2=\frac{1}{2}\left((f-1)(i+1)-\ell^{N}d_2\right)$.

Then,
\mathtoolsset{showonlyrefs=false}
    \begin{equation}
  \begin{aligned} 
    \Tr_{\Q_{\ell}(\zeta)/\Q_{\ell}}\left(S_i\right)
=(2\ell^N) \sum_{k'\in\Z_{\geq 0}\atop k'\equiv v _1\pmod{\ell^{N}}} (-1)^{k'} \binom{ (f-1)(i+1)-2}{k'}\\
+(-2\ell^{N-1})\sum_{k'\in\Z_{\geq 0}\atop k'\equiv v_2 \pmod{\ell^{N-1}}} (-1)^{k'} \binom{ (f-1)(i+1)-2}{k'}.\label{eq:trace}
  \end{aligned}   
\end{equation}
\mathtoolsset{showonlyrefs=true}
\end{itemize}

From the above, for $u\in\set{0,1}, k\geq 0$, let
\begin{align}
    i_u&:=\min\Set{i:\text{even}}{ \ell^{N-1+u}\leq  (f-1)(i+1)},\\
    M_{u,k}&=(f-1)(i_u+1)-2+(2f-2)k,\\
    v_{u,k}
    &=\frac{1}{2}\left(M_{u,k}+2-\max\Set{\ell^{N-1+u}a'}{2\nmid a',\ M_{u,k}+2\geq \ell^{N-1+u}a'}\right),\\
    v_{2,k}
    &=\frac{1}{2}\left(M_{1,k}+2-\max\Set{\ell^{N-1}b'}{2\nmid b',\ M_{1,k}+2\geq \ell^{N-1}b'}\right).
\end{align}
Each of these is the same one in \Cref{J}.
Then,
\mathtoolsset{showonlyrefs=false}
    \begin{equation}
  \begin{aligned} 
    &\frac{1}{\ell^{N-1}}\sum_{i\geq 0} (-1)^i \Tr_{\Q_{\ell}(\zeta)/\Q_{\ell}}\left(S_i\right)\\
=& -2\sum_{k\geq 0}\sum_{k'\in\Z_{\geq 0}\atop k'\equiv v_{0,k} \pmod{\ell^{N-1}}} (-1)^{k'} \binom{M_{0,k}}{k'}+(2\ell) \sum_{k\geq 0}\sum_{k'\in\Z_{\geq 0}\atop k'\equiv v_{1,k} \pmod{\ell^{N}}} (-1)^{k'} \binom{M_{1,k}}{k'}\\
&-2\sum_{k\geq 0}\sum_{k'\in\Z_{\geq 0}\atop k'\equiv v_{2,k} \pmod{\ell^{N-1}}} (-1)^{k'} \binom{M_{1,k}}{k'}.\label{eq:trace2}
  \end{aligned}   
\end{equation}
\mathtoolsset{showonlyrefs=true}
When we consider this sum modulo $\ell^N$, from \cite[(1.2)]{Sun2006} (\cite[Theorem 13]{weisman1977some}),
focusing on the index $i$ of second term in \eqref{eq:trace}, it suffices to sum up to $j= j'-2$ for the following minimum even $j'$:
$$\floor{\frac{( (f-1)(j'+1)-2)-\ell^{N-1}}{\varphi(\ell^{N})}}\geq N.$$
i.e. $$j=\min\Set{j':\text{even}}{j'\geq \dfrac{\ell^{N-1}(N\ell-N+1)+2}{f-1} - 1}-2.$$
For index $k$ of all terms in \eqref{eq:trace2}, it suffices to sum up to $k_0$ for the minimum $k$ which satisfy the following:
\begin{align}
    (f-1)(i_0+1)-2+(2f-2)k&\geq (f-1)(j+1)-2
\end{align}
since $i_0\leq i_1$, that is, $k\geq \frac{j-i_0}{2}$.
Therefore, $$\frac{1}{\ell^{N-1}}\sum_{i\geq 0} (-1)^i \Tr_{\Q_{\ell}(\zeta)/\Q_{\ell}}\left(S_i\right)=2J(N,f)$$
and the claim follows.
\end{proof}

Second, we have to prepare the following congruences:

\begin{lemmaeigo}\label{fleck}
		(1) 
		(a) For any integer $s\geq 0$,
		$$\sum_{\substack{0\leq i\leq \ell N-N}} \binom{\ell^{s}(\ell N-N+1)-1}{i\ell^{s}}(-1)^i\equiv 0\ \pmod{\ell^{\ell N}}.$$

		(b) For any integer $s\geq 0$,
		$$\sum_{0\leq i\leq N-1} \binom{\ell^{s}(\ell N-N+1)-1}{i\ell^{s+1}}(-1)^i \equiv (-\ell)^{N-1}\ \pmod{\ell^N}.$$

		(These type of congruence is known as Fleck's congruence (cf. \cite[Introduction]{Sun2006}).)

		(2)
		$$\frac{1}{\ell^N}\Tr_{\Q_{\ell}(\zeta)/\Q_{\ell}} \left((1-\zeta)^{\ell^{N-1}(\ell N-N+1)-1}\right)\equiv (-\ell)^{N-1}\ \pmod{\ell^N}.$$
	\end{lemmaeigo}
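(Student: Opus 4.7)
My plan is to deduce (2) from (1)(a) and (1)(b) via a trace identity, and to establish (1)(a) and (1)(b) themselves by a roots-of-unity filter converting each Fleck-type sum into a sum of cyclotomic traces, whose $\ell$-adic valuations are controlled via the different of the local cyclotomic extensions. Since $\ell$ is odd, $(-1)^k=(-1)^{k/\ell^a}$ whenever $\ell^a\mid k$, so the standard roots-of-unity filter yields, for any $a\ge 1$ and $m = N(\ell-1)+1$,
\begin{equation*}
\ell^a\sum_{i\ge 0}(-1)^i\binom{\ell^s m - 1}{i\ell^a} = \sum_{t=1}^{a}\Tr_{\Q(\zeta_{\ell^t})/\Q}\bigl((1-\zeta_{\ell^t})^{\ell^s m - 1}\bigr),
\end{equation*}
after grouping $\ell^a$-th roots of unity by their exact order. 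Taking $a=s$ and $a=s+1$ and subtracting produces the key identity
\begin{equation*}
\ell^{s+1}\widetilde S_s - \ell^s S_s = \Tr_{\Q(\zeta_{\ell^{s+1}})/\Q}\bigl((1-\zeta_{\ell^{s+1}})^{\ell^s m - 1}\bigr), \tag{$\ast$}
\end{equation*}
where $S_s$ and $\widetilde S_s$ denote the sums in (1)(a) and (1)(b), respectively.

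For (1)(a), I would invoke the standard bound $\ord_\ell(\Tr_{K/\Q_\ell}\alpha) \ge \lfloor (v_K(\alpha) + d_t)/e_t\rfloor$ with $K=\Q_\ell(\zeta_{\ell^t})$, ramification index $e_t = \ell^{t-1}(\ell-1)$, and different exponent $d_t = \ell^{t-1}(t(\ell-1)-1)$. Inserting $\alpha = (1-\zeta_{\ell^t})^{\ell^s m - 1}$ and $m=N(\ell-1)+1$, the bound simplifies to $\ell^{s-t+1}N + t + (\ell^s - \ell^{t-1} - 1)/(\ell^{t-1}(\ell-1))$, whose floor equals $\ell N + s$ at $t=s$ and is strictly larger for $t<s$. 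Hence the $a=s$ filter gives $\ord_\ell(\ell^s S_s) \ge \ell N + s$, and division by $\ell^s$ yields (1)(a). For (1)(b), $(\ast)$ together with (1)(a) shows that $\ell^{s+1}\widetilde S_s$ is congruent modulo $\ell^{s+1+N}$ (using $(\ell-1)N\ge 1$) to the single trace $\Tr_{\Q(\zeta_{\ell^{s+1}})/\Q}((1-\zeta_{\ell^{s+1}})^{\ell^s m - 1})$. I would compute this trace by expanding $(1-\zeta_{\ell^{s+1}})^{\ell^s m - 1}$ in the basis $\{\zeta_{\ell^{s+1}}^j\}$ and applying the Galois-averaging formula
\begin{equation*}
\Tr_{\Q(\zeta_{\ell^{s+1}})/\Q}(\zeta_{\ell^{s+1}}^j) = \begin{cases}\ell^s(\ell-1) & \ell^{s+1}\mid j,\\ -\ell^s & \ell^s\| j,\\ 0 & \ord_\ell(j)<s.\end{cases}
\end{equation*}
Kummer's theorem applied to the base-$\ell$ expansion of $\ell^s m - 1$ (whose digits are $\ell-1,\ldots,\ell-1,\ell-N,N-1$ when $N\le\ell$; the case $N>\ell$ requires a further digit-expansion step) together with Lucas's theorem then identifies the leading coefficient of $\widetilde S_s$ modulo $\ell^N$ as $(-\ell)^{N-1}$, giving (1)(b).

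For (2), specializing $(\ast)$ to $s=N-1$ identifies the trace in the statement with $\ell^N\widetilde S_{N-1} - \ell^{N-1}S_{N-1}$. By (1)(b), $\ell^N\widetilde S_{N-1}\equiv \ell^N(-\ell)^{N-1}\pmod{\ell^{2N}}$; by (1)(a), $\ell^{N-1}S_{N-1}\equiv 0\pmod{\ell^{\ell N + N - 1}}\subseteq \ell^{2N}\Z_\ell$. Dividing by $\ell^N$ produces the claim. The main obstacle will be the explicit evaluation in (1)(b): the different bound alone gives only $\ord_\ell(\widetilde S_s)\ge N-1$, and pinning down the leading coefficient modulo $\ell^N$ requires a delicate Kummer/Lucas analysis whose combinatorial complexity grows with $N$ relative to $\ell$.
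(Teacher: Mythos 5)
Your identity $(\ast)$ is correct (for $\ell$ odd and exponent $\geq 1$ the roots-of-unity filter does give $\ell^a\sum_i(-1)^i\binom{n}{i\ell^a}=\sum_{t=1}^a\Tr_{\Q(\zeta_{\ell^t})/\Q}((1-\zeta_{\ell^t})^n)$), and your different/ramification bound does prove (1)(a): the minimum of $t+N\ell^{s-t+1}+(\ell^s-\ell^{t-1}-1)/\ell^{t-1}(\ell-1)$ over $1\leq t\leq s$ is attained at $t=s$ with floor $\ell N+s$, as you say. Your deduction of (2) from (1)(a) and (1)(b) is exactly the paper's argument (the paper reaches the same decomposition by expanding the trace with the binomial theorem and the formula for $\Tr(\zeta^j)$); note the paper itself disposes of both halves of (1) by citing Weisman's Theorem 13 rather than proving them.

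The genuine gap is (1)(b). Your first step there is circular: expanding $(1-\zeta_{\ell^{s+1}})^{\ell^s m-1}$ in powers of $\zeta_{\ell^{s+1}}$ and applying the Galois-averaging formula literally reproduces $(\ast)$, i.e.\ returns $\ell^{s+1}\widetilde{S}_s-\ell^s S_s$, so you are back to evaluating $\widetilde{S}_s$ itself. Your second step, that Kummer and Lucas "identify the leading coefficient of $\widetilde{S}_s$ modulo $\ell^N$ as $(-\ell)^{N-1}$", is an assertion of the conclusion rather than a proof, and these tools cannot deliver it: Lucas gives only $\binom{\ell^s m-1}{i\ell^{s+1}}\equiv\binom{N-1}{i}\pmod{\ell}$, hence only $\widetilde{S}_s\equiv\sum_i(-1)^i\binom{N-1}{i}=0\pmod{\ell}$ for $N\geq 2$, while Kummer controls the valuation of each individual binomial coefficient, not the unit part of the alternating sum. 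What (1)(b) actually demands, given the easy bound $\ord_\ell(\widetilde{S}_s)\geq N-1$, is the Fleck-quotient congruence $\widetilde{S}_s/\ell^{N-1}\equiv(-1)^{N-1}\pmod{\ell}$; determining such quotients is precisely the nontrivial content of the Weisman/Sun--Wan-type theorems the paper invokes, and is not a routine consequence of Kummer or Lucas. Since you explicitly leave this as "the main obstacle", the proof of (1)(b) --- and therefore of (2), which you derive from it --- is incomplete.
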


	\begin{proof}
		(1) This follows from \cite[Theorem 13]{weisman1977some}.

		(2) 

        Since 
$$
\Tr_{\Q_{\ell}(\zeta)/\Q_{\ell}}(\zeta^n) =
\begin{dcases}
	\ell^N - \ell^{N-1} & (\ell^N \mid n) \\
	-\ell^{N-1} & (\ell^{N-1} \| n) \\
	0 & (\text{otherwise})
\end{dcases},
$$
        we have the following by binomial expansion:
		\begin{align}
			&\Tr_{\Q_{\ell}(\zeta)/\Q_{\ell}} \left((1-\zeta)^{\ell^{N-1}(\ell N-N+1)-1}\right)\\
			&\equiv  \sum_{0\leq i\leq \ell^{N-1}(\ell N-N+1)-1} \binom{\ell^{N-1}(\ell N-N+1)-1}{i}(-1)^i\Tr_{\Q_{\ell}(\zeta)/\Q_{\ell}}(\zeta^i)\\
			&\equiv  \sum_{\substack{1\leq i\leq \ell N-N\\ \ell\nmid i}} \binom{\ell^{N-1}(\ell N-N+1)-1}{i\ell^{N-1}}(-1)^i\cdot (-\ell^{N-1})\\
			+&\sum_{0\leq i\leq N-1} \binom{\ell^{N-1}(\ell N-N+1)-1}{i\ell^{N}}(-1)^i\cdot (\ell^{N-1}(\ell-1))\\
			&= \ell^{N-1}\left(-\sum_{\substack{0\leq i\leq \ell N-N}} \binom{\ell^{N-1}(\ell N-N+1)-1}{i\ell^{N-1}}(-1)^i\right. \\ & \left.
			+\ell \sum_{0\leq i\leq N-1} \binom{\ell^{N-1}(\ell N-N+1)-1}{i\ell^{N}}(-1)^i\right).
		\end{align}

		Hence, the result follows from (1).
	\end{proof}
    
Finally, we will compute $(1 + \pi^{f_{\Delta,\pi}-1}, \Delta)_{\ell^N}$ in the case when $(\cdot, \Delta)_{K_{\pi}, \ell^N}$ ramifies (i.e. $f_{\Delta,\pi} > 0$).

\begin{propositioneigo}\label{value of hilbert symbol}
	\[
	(1 + \pi^{f_{\Delta,\pi}-1}, \Delta)_{\ell^N}  =
	\begin{dcases}
    \zeta_{\ell}^{2b(-1)^{N-1}} & (b \neq 0) \\
		\zeta_{\ell}^{\frac{2cH}{\ell^N}} & (b = 0)
	\end{dcases}
	\]
    (recall that $b=\ord_{\ell}(\Delta)$)
   where $H=J(N,2\ell^{N-\ord_{\ell}(c)})$ and $J(n,f)$ is defined in \Cref{J}.
    \mathtoolsset{showonlyrefs=false}
	\end{propositioneigo}
\mathtoolsset{showonlyrefs=true}
\begin{proof}
    Let $(1 + \pi^{f_{\Delta,\pi}-1}, \Delta)_{\ell^N} = \zeta^I$.
	First, we can see that $(1+\pi^{f_{\Delta,\pi}-1})^{\ell}$ is an element of $1+(\pi^{f_{\Delta,\pi}})$ by considering the binomial expansion. Hence, $(1+\pi^{f_{\Delta,\pi}-1},\Delta)_{\ell^N}$ is an $\ell$-th root of unity, implying that $\ord_{\ell}(I)=N-1$.
    Write
	$$(1+\pi^{f_{\Delta,\pi}-1},\Delta)_{\ell^N}
	=(1+\pi^{f_{\Delta,\pi}-1},\epsilon)_{\ell^N}(1+\pi^{f_{\Delta,\pi}-1},\ell)_{\ell^N}^b (1+\pi^{f_{\Delta,\pi}-1},1+c)_{\ell^N}$$
    (recall that $b=\ord_{\ell}(\Delta)$). Since $\epsilon^{\ell^N}=\epsilon$, it follows that $(1+\pi^{f_{\Delta,\pi}-1},\epsilon)_{\ell^N}=1$. Therefore, the calculation reduces to the remaining two factors. In our case, we only need to compute $(1+\pi^{f_{\Delta,\pi}-1},\ell)_{\ell^N}$ when $b\neq 0$.
    
    \begin{itemize}[leftmargin=0pt]
        \item \textbf{Calculation of $(1+\pi^{f_{\Delta,\pi}-1},\ell)_{\ell^N}$ (when $b\neq 0$).}

        Since $b\neq 0$, we have $w=0$, implying $f_{\Delta,\pi}=\ell^{N-1}(\ell+1)$. In this case, since $f(\ell)=f_{\Delta,\pi}$, we have
	$$1+\pi^{f_{\Delta,\pi}-1}\equiv 1+(-2\pi'+\pi'{\pi'}^c)^{f_{\Delta,\pi}-1}
	\equiv 1+(-2\pi')^{f_{\Delta,\pi}-1}
	\equiv (1+{\pi'}^{f_{\Delta,\pi}-1})^{-2} \pmod{{\pi'}^{f(\ell)}}.$$
    Thus, $(1+\pi^{f_{\Delta,\pi}-1},\ell)_{\ell^N}=(1+{\pi'}^{f_{\Delta,\pi}-1},\ell)_{\ell^N}^{-2}$.
	From the equation on \cite[p.89, l.2]{coleman1988stable}, if we set $$F(X)=1+X^{f_{\Delta,\pi}-1},\ G(X)=\dprod_{\substack{1\leq i\leq \ell^N-1\\ \ell\nmid i}} (1-(1-X)^i)$$ and
	$$\widetilde{\Lambda}(X):=\log(1+X^{f_{\Delta,\pi}-1})-\frac{1}{\ell}\log(1+(1-(1-X)^{\ell})^{f_{\Delta,\pi}-1}),$$
	then we have
	\begin{align}
		&[1+{\pi'}^{f_{\Delta,\pi}-1},\ell]_{\ell^N}\\
		=&-F'(0)\log (N_{\Q_{\ell}(\zeta)/\Q_{\ell}}(G({\pi'})))-\frac{1}{\ell^N}\Tr_{\Q_{\ell}(\zeta)/\Q_{\ell}} \left(\widetilde{\Lambda}(1-\zeta)\frac{DG}{G}(1-\zeta)\right)\ \pmod{\ell^N}
	\end{align}
	(assuming $\log(\ell)=0$ i.e. the $\log$ is the Iwasawa logarithm). We note that the Hilbert symbol used in \cite[p.43]{coleman1988stable} is the reciprocal of the usual definition. First, $F'(0)\log (N_{\Q_{\ell}(\zeta)/\Q_{\ell}}(G({\pi'})))=0.$ Moreover, from \Cref{conductor}, the conductor exponent of $(\cdot, \ell)_{\ell^N}$ is $\ell^{N-1}(\ell+1)$. Therefore, by \cite[p.89, Theorem 6.3]{coleman1988stable}, $\widetilde{\Lambda}$ can be considered modulo $X^{\ell^{N-1}(\ell-1)}\Z_{\ell}[[X]]$.
    
        We have
	\begin{align}
		\widetilde{\Lambda}(X)
		\equiv &X^{\ell^{N-1}(\ell+1)-1}-\frac{1}{\ell}(\ell X)^{\ell^{N-1}(\ell+1)-1}\\
		=& (1-\ell^{\ell^{N-1}(\ell+1)-2})X^{\ell^{N-1}(\ell+1)-1}\ \pmod{X^{\ell^{N-1}(\ell-1)}\Z_{\ell}[[X]]}.
	\end{align}

	Therefore,
	\begin{align}
		&[1+{\pi'}^{f_{\Delta,\pi}-1},\ell]_{\ell^N}\\
		=&-\frac{1}{\ell^N}\Tr_{\Q_{\ell}(\zeta)/\Q_{\ell}} \left((1-\ell^{\ell^{N-1}(\ell+1)-2}){\pi'}^{\ell^{N-1}(\ell+1)-1}\frac{DG}{G}(\pi')\right)\ \pmod{\ell^N}\\
		=&-\frac{1}{\ell^N}\Tr_{\Q_{\ell}(\zeta)/\Q_{\ell}} \left({\pi'}^{\ell^{N-1}(\ell+1)-1}\frac{DG}{G}(\pi')\right)\ \pmod{\ell^N}.
	\end{align}

    From \cite[Lemma 6.5 (i)]{coleman1988stable} (note that the formula for $D[a]/[a]$ on p.91 contains an error; the correct expression is $D[a]/[a]=-\frac{a(1-x)^a}{1-(1-x)^a}$), there exists some $a'\in\Z_{\ell}[\zeta]$ such that
		\begin{align}
		    &\frac{1}{\ell^N}\Tr_{\Q_{\ell}(\zeta)/\Q_{\ell}} \left({\pi'}^{n}\frac{DG}{G}(\pi')\right)\\
            \equiv& \frac{1}{\ell^N}\Tr_{\Q_{\ell}(\zeta)/\Q_{\ell}} \left({\pi'}^{n+\ell^{N-1}(\ell N-\ell-N)}(1+\pi' a')\right)\ \pmod{\ell^N}.
		\end{align}

		Here, since the different of $\Q_{\ell}(\zeta)/\Q_{\ell}$ is $\mf{d}_{\Q_{\ell}(\zeta)/\Q_{\ell}}=(\pi')^d$ where $d=\ell^{N-1}(\ell N-N-1)$, $\ell^{2N}\mid \Tr_{\Q_{\ell}(\zeta)/\Q_{\ell}}({\pi'}^m a)\ (\text{for all $a\in\Z_{\ell}[\zeta]$})$ is equivalent to
		\begin{align}
			m\geq& \ord_{\pi'}({\pi'}^{-d}\ell^{2N})
			= -d+2N\ell^{N-1}(\ell-1)
			= \ell^{N-1}(\ell N-N+1).
		\end{align}

		Thus, since $n+\ell^{N-1}(\ell N-\ell-N)+1\geq \ell^{N-1}(\ell N-N+1)$, for $n\geq \ell^{N-1}(\ell+1)-1$, 
		$$\frac{1}{\ell^N}\Tr_{\Q_{\ell}(\zeta)/\Q_{\ell}} \left({\pi'}^{n}\frac{DG}{G}(\pi')\right)\equiv \frac{1}{\ell^N}\Tr_{\Q_{\ell}(\zeta)/\Q_{\ell}} \left({\pi'}^{n+\ell^{N-1}(\ell N-\ell-N)}\right)\ \pmod{\ell^N}.$$
	From \Cref{fleck}, we have
$$[1+{\pi'}^{f_{\Delta,\pi}-1},\ell]_{\ell^N} = -(-\ell)^{N-1} \pmod{\ell^N}.$$

\item \textbf{Calculation of $(1+\pi^{f_{\Delta,\pi}-1},1+c)_{\ell^N}$.}
    \begin{itemize}[leftmargin=0pt]
        \item \textbf{Case (A) $b\neq 0$.}

Since $w' = w(1+c) = \ord_{\ell}(c) \geq 1$, we have
$$f(1+c) =
\begin{cases}
	2\ell^{N-w'} & (1\leq w'<N) \\
	2 & (w'=N) \\
	0 & (w'>N).
\end{cases}$$

Thus, since $b\neq 0$ implies $w=0$, we get
$$f_{\Delta,\pi}-1 = \ell^{N-1}(\ell+1)-1 \geq f(1+c).$$
Therefore, $(1+\pi^{f_{\Delta,\pi}-1},1+c)_{\ell^N} = 1$.

\item \textbf{Case (B) $b = 0$.}

In this case, since $w = \ord_{\ell}(c) \geq 1$, we know $f_{\Delta,\pi}>0$ is given by
$$f_{\Delta,\pi} =
\begin{cases}
	2\ell^{N-w} & (1\leq w<N) \\
	2 & (w=N)
\end{cases}
= 2\ell^{N-\ord_{\ell}(c)}.$$

So, the claim follows by \Cref{1+c} setting $f=f_{\Delta,\pi}$ and considering the fact $\ell^{N-1}\mid I=[1 + \pi^{f_{\Delta,\pi}-1}, \Delta]_{\ell^N}$.
    \end{itemize}
    \end{itemize}
\end{proof}
\begin{remarkeigo}
	For $N = 2$, $H \pmod{\ell^2}$ takes the following values in the range $\ell \leq 50$:


	\begin{table}[h]
    \centering
		\begin{tabular}{|c|c|c|c|c|c|c|c|c|c|c|c|c|c|c|c|c|c|c|c|}
			\hline
			$\ell$ & 3 & 5& 7& 11& 13& 17& 19& 23& 29& 31 & 37& 41 &47
            \\ \hline
			$H \pmod{\ell^2}$ & 3 & 5& 7& 11& 13& 17& 19& 23& 29& 31& 37& 41 &47
            \\ \hline
		\end{tabular}
        \caption{The case when $\ord_{\ell}(c) = 1$.}
	\end{table}


	\begin{table}[h]
    \centering
    \footnotesize
		\begin{tabular}{|c|c|c|c|c|c|c|c|c|c|c|c|c|c|c|c|c|c|c|c|}
			\hline
			$\ell$ & 3 & 5& 7& 11& 13& 17& 19& 23& 29& 31
            & 37& 41 & 43& 47
			\\ \hline
			$H \pmod{\ell^2}$ &5 &14 &34 &87 & 25& 118& 341& 91& 231& 526 & 554& 1516 & 1461 &1926
			\\ \hline
		\end{tabular}
        \caption{The case when $\ord_{\ell}(c) = 2$.}
	\end{table}
\end{remarkeigo}

These enable the computation of the relative root number $W(\phi_{\delta,\pi},\eta_{\pi_F})$, and consequently $W(\phi_{\delta,\pi})$ together with the following lemma:
\begin{lemmaeigo}
	$$\chi_{\delta}|_{\A_F\x}=1.$$
\end{lemmaeigo}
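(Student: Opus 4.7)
The plan is to prove the statement by showing that $\chi_\delta|_{\A_F^\times}$ is trivial at every local place. Using the earlier identification $\chi_{\delta,V} = (\cdot,\delta)_{K_V,\ell^N}$, for $x = (x_v) \in \A_F^\times$ one has
\[
\chi_\delta|_{\A_F^\times}(x) = \prod_v \prod_{V \mid v} (x_v, \delta)_{K_V, \ell^N}.
\]
Since $\delta \in \Q \subset F$ belongs to every $F_v$, it suffices to show $\prod_{V \mid v}(x_v, \delta)_{K_V, \ell^N} = 1$ for each place $v$ of $F$ and every $x_v \in F_v^\times$. The argument then proceeds by a case split according to the decomposition behavior of $v$ in $K/F$.

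If $v$ splits in $K$ as $v\O_K = V \cdot V^c$, the embeddings $K \hookrightarrow K_V$ and $K \hookrightarrow K_{V^c}$ are exchanged by complex conjugation $c \in \gal(K/F)$; under the identifications $K_V \cong F_v \cong K_{V^c}$, the element $\zeta$ maps to a primitive root $\zeta_0 \in F_v$ via the first and to $\zeta_0^{-1}$ via the second. Both Hilbert symbols produce the same element $\alpha = \zeta_0^{a} \in F_v$, since they depend only on $x_v,\delta \in F_v^\times$; however, viewed in $\mu_{\ell^N}(K) \subset \C^\times$ via the respective embeddings, the first corresponds to $\zeta^a$ and the second to $\zeta^{-a}$, so their product is $1$.

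If $v$ is inert or ramified in $K$, there is a unique prime $V$ above $v$ and $\gal(K_V/F_v) = \langle c \rangle$ has order $2$. Galois-equivariance of the Hilbert symbol together with $cx_v = x_v$ and $c\delta = \delta$ gives $(x_v,\delta)_{K_V,\ell^N} \in \mu_{\ell^N}(K_V)^c = \mu_{\ell^N}(F_v)$, so it remains to verify $\mu_{\ell^N}(F_v) = \set{1}$. For inert $v \nmid \ell$, the conditions $\zeta \in K_V$ and $\zeta \notin F_v$ together with the cyclicity of $(\Z/\ell^N\Z)^\times$ force $|\F_v| \equiv -1 \pmod{\ell^N}$, whence $\ell \nmid |\F_v| - 1$. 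For $v \mid \ell$, one has $F_v = \Q_\ell(\zeta + \zeta^{-1})$ of degree $\ell^{N-1}(\ell-1)/2$ over $\Q_\ell$; this field cannot contain any primitive $\ell^k$-th root of unity for $k \geq 1$, since $[\Q_\ell(\zeta_{\ell^k}):\Q_\ell] = \ell^{k-1}(\ell-1)$ does not divide $\ell^{N-1}(\ell-1)/2$ for odd $\ell$.

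The main subtlety lies in the split case: one must carefully track that the two Hilbert symbols, while numerically equal as elements of the abstract local field $F_v$, represent Galois-conjugate elements of $K$ via the two distinct embeddings $K \hookrightarrow K_V$ and $K \hookrightarrow K_{V^c}$, and therefore invert one another when viewed in $\C^\times$ through a fixed embedding of $K$. Once this identification is set up correctly, the inert/ramified analysis is straightforward from the triviality of $\mu_{\ell^N}(F_v)$.
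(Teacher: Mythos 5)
Your argument is correct. Note that the paper itself gives no independent proof here: it simply defers to Shu's Lemma 3.2, whose underlying mechanism is the same local analysis you carry out — splitting into the decomposition types of $v$ in $K/F$, using that complex conjugation inverts $\mu_{\ell^N}$ so that the two symbols at a split place cancel in $\C^\times$, and that at a non-split place the symbol is $c$-fixed hence lands in $\mu_{\ell^N}(F_v)=\{1\}$ (equivalently, is killed by $\omega=\omega^{-1}$ with $\ell$ odd). Your write-up is a complete, self-contained substitute for the citation; the only points worth adding explicitly are that the archimedean components are trivial because the infinite part of $\chi_{\delta}$ is trivial (the identity $\chi_{\delta,V}=(\cdot,\delta)_{K_V,\ell^N}$ is stated only for finite primes), and that the only finite non-split places are the inert ones and the unique ramified place above $\ell$, both of which you treat.
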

\begin{proof}
	This follows in exactly the same as \cite[Lemma 3.2]{shu2021root}.
\end{proof}

\begin{propositioneigo}\label{relative}

Suppose $b=\ord_{\ell}(\delta)$ and $r^rs^s(\ell^N-t)^t \delta^{r+s}=\epsilon'\ell^{b'}(1+c')\ (\epsilon'\in\mu_{\ell-1}(\Q_{\ell}),\ b'\in\Z,\ c'\in\ell\Z_{\ell})$ (since $\ell \nmid rst$, we have $b'=(r+s)b$ ).
	Then, the conductor exponent $f'$ of $\phi_{\delta,\pi}$ is
	$$f'=
\begin{dcases}
	{\ell^{N-1}(\ell+1)}\ &(b\neq 0)\\
	{2\ell^{N-\ord_{\ell}(c')}}\ &(b=0,\ 1\leq \ord_{\ell}(c')\leq N)\\
	1\ &(b=0,\ \ord_{\ell}(c')>N).
\end{dcases}$$
Then, we have
$$W(\phi_{\delta,\pi},\eta_{\pi_F})=
\begin{dcases}
	-\leg{rstb(r+s)}{\ell}\ &(b\neq 0)\\
	-\leg{(-1)^N 2rstJ}{\ell}\ &(b=0,\ 1\leq \ord_{\ell}(c')\leq N)\\
	\leg{2}{\ell}\ &(b=0,\ \ord_{\ell}(c')>N).
\end{dcases}$$
where

$$J= \frac{2c'}{\ell^{N}}J(N,2\ell^{N-\ord_{\ell}(c')}).$$
\end{propositioneigo}

\begin{proof}
	First, we compute $f'$. Using the prime element $\pi_{\rm{CM}}$ satisfying $\pi_{\rm{CM}}^{\ell-1}=-\ell,\ \pi_{\rm{CM}}/(1-\zeta^{\ell^{N-1}})\equiv 1\pmod{\pi_{\rm{CM}}}$ as in \cite[Theorem 5.3]{coleman1988stable}, we have
	$$\phi_{\delta,\pi}(x)=(x,\delta)_{\ell^N}^{r+s} j_{r,s,t,\pi}(x)
	=(x,r^rs^s(\ell^N-t)^t \delta^{r+s})_{\ell^N}(x,\pi_{\rm{CM}})_2\ (x\in \mc{O}_{\Q_{\ell}(\zeta)}\x).$$
	Here, we note again that the Hilbert symbol in \cite[p.43]{coleman1988stable} is the inverse of the usual definition and the definition of Jacobi sum is also different.
	Using the formula for the tamely ramified Hilbert symbol \cite[Chapter V, Proposition 3.8]{neukirch2013algebraic}, we find $(\pi_{\rm{CM}},x)_2=(\pi,x)_2$. Moreover, the conductor exponent of $(\pi,x)_2$ is $1$. On the other hand,
	$$w'=w(r^rs^s(\ell^N-t)^t \delta^{r+s})
	=\begin{cases}
		\ord_{\ell}(c') &(b=0)\\
		0 &(b\neq 0)
	\end{cases}$$
    since $\ord_{\ell}(\delta) = 0$ or $\ell \nmid \ord_{\ell}(\delta)$ and $b'=(r+s)b$.
	From \Cref{conductor}, when $w'>N$ or $w'=N\neq \ord_{\ell}(c')$ (in fact, the latter case doesn't occur), we have $f'=1$. In all other cases, it equals to the conductor of $(r^rs^s(\ell^N-t)^t \delta^{r+s},x)_{\ell^N}$, which can be computed in \Cref{conductor}.

	Next, we compute $W(\phi_{\delta,\pi},\eta_{\pi_F})$ using Rohrlich's formula (\cite[Proposition 2]{rohrlich1992root}).

	If $f'=1$, we have $$W(\phi_{\delta,\pi},\eta_{\pi_F})=\leg{2}{\ell}.$$
	For $f'>1$,
	$$W(\phi_{\delta,\pi},\eta_{\pi_F})=\leg{-2l}{\ell}\phi_{\delta,\pi}(\pi)^{f'-1}i^u,$$
	where $u=\begin{dcases}
		0\ (\ell\equiv 1\pmod{4})\\
		1\ (\ell\equiv 3\pmod{4})
	\end{dcases}$

	and $l$ is the class determined by $\phi_{\delta,\pi}(1+\pi^{f'-1})=\zeta_{\ell}^l$.
        Since
	$$\phi_{\delta,\pi}(1+\pi^{f'-1})
	=(r^rs^s(\ell^N-t)^t \delta^{r+s},1+\pi^{f'-1})_{\ell^N},$$
	and since $b'=(r+s)b$, the value of $l$ is given by \Cref{value of hilbert symbol} as
	$$l
	=\begin{dcases}
		2b(r+s)(-1)^{N-1} \pmod{\ell} &(b\neq 0)\\
		J \pmod{\ell} &(b=0).
	\end{dcases}$$
From the above lemma and \cite[Proposition 4]{rohrlich1992root}, we find that
\begin{align}
	\phi_{\delta,\pi}(\pi)
=\phi_{\pi}(\pi)
=&\prod_{V\mid \infty} \phi_{V}(\pi)\inv\\
=&\prod_{h\in\Phi\inv} \frac{\pi^{\sigma_h}}{\abs{\pi^{\sigma_h}}}\\
=&i^{\frac{\ell^{N-1}(\ell-1)}{2}}(-1)^{\frac{\ell^{N-1}(\ell-1)}{2}}\cdot\left\{-\leg{-2rst}{\ell}\right\}\inv\\
=&-i^{\frac{\ell^{N-1}(\ell-1)}{2}}\leg{2rst}{\ell}.
\end{align}

\begin{align}
    W(\phi_{\delta,\pi},\eta_{\pi_F})
=&\leg{-2l}{\ell}\left\{-i^{\frac{\ell^{N-1}(\ell-1)}{2}(f'-1)}\leg{2rst}{\ell}\right\}i^u\\
=&-\leg{(-1)^{N+1} 2rst\frac{I}{\ell^{N-1}}}{\ell}i^{\frac{\ell^{N-1}(\ell-1)}{2}f'}.
\end{align}
This concludes the proof along with $\leg{-1}{\ell}=i^{\ell-1}$.
\end{proof}

\section{Global root number $W(\phi_{\delta}^{(i)})$ and global conductor $N_{r,s,t,\delta}$}
We finally organize the result of \Cref{eta}, \Cref{value of hilbert symbol},  and \Cref{relative} to compute the global root number.
\begin{theoremeigo}
Let $r, s, t > 0$ be integers such that $r + s + t = \ell^N$ and $\ell \nmid rst$. Let $\epsilon'_N \in \mu_{\ell-1}(\Q_{\ell})$, $b'_N \in \Z$, and $c_N' \in \ell \Z_{\ell}$ such that $r^r s^s (\ell^N-t)^t \delta^{r+s} = \epsilon'_N \ell^{b'_N}(1 + c_N')$.

Then the global root number of $\phi_{\delta}^{(N)}$ is given by
$$W(\phi_{\delta}^{(N)})=\dprod_{\text{$p\le \infty$}}W_p(\phi_{\delta}^{(N)})$$
where for primes $p \neq \ell$,
$$W_p(\phi_{\delta}^{(N)})
=\begin{dcases}
	i^{-\frac{\ell^{N-1}(\ell-1)}{2}}\ &(p=\infty)\\
	\leg{p}{\ell}\ &(p\mid \delta)\\
	1\ &(p\nmid \delta)
\end{dcases} $$
and,
$$W_{\ell}(\phi_{\delta}^{(N)})=
\begin{dcases}
	-\leg{rst\ord_{\ell}(\delta)(r+s)}{\ell}i^{\frac{\ell^{N-1}(\ell-1)}{2}}\ &(\ord_{\ell}(\delta)\neq 0)\\
	-\leg{(-1)^N 2rstJ}{\ell}i^{\frac{\ell^{N-1}(\ell-1)}{2}}\ &(\ord_{\ell}(\delta)=0,\ 1\leq \ord_{\ell}(c_N')\leq N)\\
	\leg{2}{\ell}i^{\frac{\ell^{N-1}(\ell-1)}{2}}\ &(\ord_{\ell}(\delta)=0,\ \ord_{\ell}(c_N')>N).
\end{dcases}$$

where

$$J= \frac{2c_N'}{\ell^{N}}J(N,2\ell^{N-\ord_{\ell}(c_N')}).$$

\end{theoremeigo}

And, the global conductor $N_{r,s,t,\delta}$ is as follows:

\begin{theoremeigo}\label{global conductor}
    The global conductor is
$$N_{r,s,t,\delta}=\ell^{\ell^{N-1}(N\ell-N-1)+f_{\ell}}\prod_{p\mid \delta\atop p\neq \ell} p^{\ell^{N-1}(\ell-1)f_p}$$ where $f_p$ is defined as follows:
Suppose $$v_p=\ord_{p}\left\{\left(\frac{r^rs^s(t-p^N)^t \delta^{r+s}}{p^{\ord_{p}(\delta)}}\right)^{p-1}-1\right\}.$$
	Then, 
	$$f_p=
\begin{dcases}
	{p^{N-1}(p+1)}\ &(\ord_{p}(\delta)\neq 0)\\
	{2p^{N-v_p}}\ &(\ord_{p}(\delta)=0,\ 1\leq v_p\leq N)\\
	1\ &(\ord_{p}(\delta)=0,\ v_p>N).
\end{dcases}$$
\end{theoremeigo}
\begin{proof}
    The conductor of $\phi_{\delta}$ is 
    $$\mf{f}_{\phi_{\delta}}=(\pi)^{f_{\ell}}\prod_{\mf{p}\mid \delta\atop \mf{p}\neq (\pi)}\mf{p}^{f(\phi_{\delta,\mf{p}})}$$ where $f_{\ell}$ is the $f'$ in \Cref{relative}. We note that if we write $r^rs^s(t-p^N)^t \delta^{r+s}=\epsilon_p p^{b_p}(1+c_p)\ (\epsilon_p\in\mu_{p-1}(\Q_{p}),\ b_p\in\Z,\ c_p\in p\Z_{p})$, then $v_p=\ord_{p}(c_p)$.
    Therefore the absolute norm is
$$\mc{N}(\mf{f}_{\phi_{\delta}})=\ell^{f'}\prod_{p\mid \delta\atop p\neq \ell} p^{[K:\Q]f_p}$$ where $f_p=f(\phi_{\delta,\mf{p}})$ ($\mf{p}$ is a prime ideal above $p$) is calculated in exactly same manner as $f'$.
So the theorem follows from $\abs{D_K}=\ell^{\ell^{N-1}(N\ell-N-1)}$.
\end{proof}

\begin{exampleeigo}
    Consider the case when $N = 2,\ \ell=3$. All possible $(r,s)$ are $(r,s)=(1,1),(1,4)$ and $(2,2)$ up to equivalence given in \cite[(1),(2)]{coleman1989torsion}.

    Tables of the global root number $W(\phi_{\delta}^{(N)})$ and the global conductor $N_{r,s,t,\delta}$ in this case is as follows.

        \begin{table}[h]
    \centering
		\begin{tabular}{|c|c|c|c|c|c|c|c|c|c|c|c|c|c|c|c|c|c|c|c|}
			\hline
			$\delta$ &1 &2 &3 &4 &5 &6 &7 &8 
            \\ \hline
                $\ord_{\ell}(c_N')$  &1 &3 &1 &1 &1 &3 &2 &1 
                \\ \hline
			$W(\phi_{\delta}^{(N)})$ & $1$ & $1$ & $1$ &$-1$ &$-1$ &$-1$ &$-1$ &$-1$ 
            \\ \hline
            $N_{r,s,t,\delta}$ & $3^{15}$  
& $2^{36} \cdot 3^{10}$  
& $3^{21}$  
& $2^{36} \cdot 3^{15}$  
& $3^{15} \cdot 5^{180}$  
& $2^{36} \cdot 3^{21}$  
& $3^{11} \cdot 7^{336}$  
& $2^{36} \cdot 3^{15}$  
\\ \hline
		\end{tabular}
            \caption{The case when $(r,s)=(1,1)$.}
	\end{table}
    
        \begin{table}[h]
    \centering
		\begin{tabular}{|c|c|c|c|c|c|c|c|c|c|c|c|c|c|c|c|c|c|c|c|}
			\hline
			$\delta$ &1 &2 &3 &4 &5 &6 &7 &8 
            \\ \hline
                $\ord_{\ell}(c_N')$  &1 &2 &1 &1 &1 &2 &3 &1 
                \\ \hline
			$W(\phi_{\delta}^{(N)})$ & $1$ & $1$ & $1$ &$-1$ &$-1$ &$-1$ &$-1$ &$-1$ 
            \\ \hline
            $N_{r,s,t,\delta}$ 
            &$3^{15}$  
&$2^{36} \cdot 3^{10}$  
&$3^{21}$  
&$2^{36} \cdot 3^{15}$  
&$3^{15} \cdot 5^{180}$  
&$2^{36} \cdot 3^{21}$  
&$3^{11} \cdot 7^{336}$  
&$2^{36} \cdot 3^{15}$  
\\ \hline

		\end{tabular}
            \caption{The case when $(r,s)=(1,4)$.}
	\end{table}

        \begin{table}[h]
    \centering
		\begin{tabular}{|c|c|c|c|c|c|c|c|c|c|c|c|c|c|c|c|c|c|c|c|}
			\hline
			$\delta$ &1 &2 &3 &4 &5 &6 &7 &8 
            \\ \hline
                $\ord_{\ell}(c_N')$  &1 &3 &1 &1 &1 &3 &2 &1 
                \\ \hline
			$W(\phi_{\delta}^{(N)})$ & $-1$ & $1$ & $1$ &$1$ &$1$ &$-1$ &$1$ &$1$ 
            \\ \hline
            $N_{r,s,t,\delta}$ 
            &$3^{15}$  
&$2^{36} \cdot 3^{10}$  
&$3^{21}$  
&$2^{36} \cdot 3^{15}$  
&$3^{15} \cdot 5^{180}$  
&$2^{36} \cdot 3^{21}$  
&$3^{11} \cdot 7^{336}$  
&$2^{36} \cdot 3^{15}$  
\\ \hline

		\end{tabular}
            \caption{The case when $(r,s)=(2,2)$.}
	\end{table}

\end{exampleeigo}

\section{Some speculation}

The author has the following conjecture based on numerical computations using  \textsf{SageMath} \cite{sagemath}:
\begin{conjectureeigo}\label{yosou}
    Let us keep the notation in \Cref{J}, \Cref{value of hilbert symbol}.
    For each $u\in\set{0,1}$, there exist $0\leq r_u\leq \ceil{\frac{j-i_0}{2}}$, the following holds for $0\leq k\leq \ceil{\frac{j-i_0}{2}}$,
  \begin{align}
      U_k^{(u)}&\equiv
      \begin{cases}
          -\ell^{N-\ord_{\ell}(c_N')} \pmod{\ell^{N-\ord_{\ell}(c_N')+1}} &(\ord_{\ell}(c_N')<N,\ k=r_u)\\
          0 \pmod{\ell^{N-\ord_{\ell}(c_N')+1}} &(\ord_{\ell}(c_N')<N,\ k\neq r_u)\\
          1 \pmod{\ell} &(\ord_{\ell}(c_N')=N, k=0)\\
          0 \pmod{\ell} &(\ord_{\ell}(c_N')=N, k>0)
      \end{cases},\\
      U'_k&\equiv 0 \pmod{\ell^{N-\ord_{\ell}(c_N')+1}}.\\
  \end{align}
  (We note that we can obtain $U'_k\equiv 0 \pmod{\ell^{\ell}}$ by \cite[Theorem 13]{weisman1977some})\\

  Therefore,
    $$H/\ell^{N-\ord_{\ell}(c_N')}\equiv 
    \begin{dcases}
        1 \pmod{\ell} &(1\leq \ord_{\ell}(c_N')<N)\\
        -1 \pmod{\ell} &(\ord_{\ell}(c_N')=N)
    \end{dcases}$$

    and
	\[
	(1 + \pi^{f_{\Delta,\pi}-1}, \Delta)_{\ell^N} =
	\begin{dcases}
		\zeta_{\ell}^{2b(-1)^{N-1}} & (b \neq 0) \\
		\zeta_{\ell}^{\frac{2c_N'}{\ell^{\ord_{\ell}(c_N')}}} & (b = 0,\ 1\leq \ord_{\ell}(c_N')<N)\\
            \zeta_{\ell}^{-\frac{2c_N'}{\ell^{\ord_{\ell}(c_N')}}} & (b = 0,\ \ord_{\ell}(c_N')=N).
	\end{dcases}
	\]
\end{conjectureeigo}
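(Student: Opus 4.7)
The plan is to translate each of the alternating binomial sums $U_k, U'_k, V_k$ into a linear combination of expressions $(1-\omega)^{M}$ indexed by roots of unity $\omega$, via the standard projection identity
\[
\sum_{k'\equiv v \pmod{m}}(-1)^{k'}\binom{M}{k'}=\frac{1}{m}\sum_{\omega^m=1}\omega^{-v}(1-\omega)^{M},
\]
where $m=\ell^{N-1}$ for $U_k$ and $U'_k$ and $m=\ell^{N}$ for $V_k$. This reduces the study of these sums modulo powers of $\ell$ to controlling the $\pi$-adic valuations and leading coefficients of $(1-\omega)^{M_{u,k}}$ at each level of the tower of roots of unity, exploiting $\ord_\pi(1-\zeta_{\ell^j})=\ell^{N-j}$. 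A direct application of Weisman's congruence \cite[Theorem 13]{weisman1977some}, in the style of the proof of \cref{fleck}, should yield the uniform divisibility $U_k, V_k \equiv 0 \pmod{\ell^{N-\ord_\ell(c)}}$ in the generic case, and the much stronger $U'_k \equiv 0 \pmod{\ell^{\ell}}$ because replacing $\ell^{N-1}$ by $\ell^{N}$ in the threshold defining $M_{1,k}$ inflates the binomial parameter by a factor of order $\ell$.

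The second and harder step is to pin down the unique index $k=r_U$ (respectively $k=r_V$) at which $U_k$ (respectively $V_k$) fails to vanish modulo $\ell^{N-\ord_\ell(c)+1}$, and to identify the residue as $-\ell^{N-\ord_\ell(c)}$. Stratifying the projection sum by the order of $\omega$, contributions from roots of order $\ell^j$ with $j<N-1$ (or $j<N$ for $V_k$) carry factors $(1-\omega)^{M_{u,k}}$ of $\pi$-adic valuation far exceeding the target and can be discarded, leaving a sum over primitive $\ell^{N-1}$-th (respectively $\ell^{N}$-th) roots. Since $M_{u,k}$ advances by $2(2\ell^{N-\ord_\ell(c)}-1)$ as $k$ steps by one, the residue of $M_{u,k}$ modulo the relevant cycle length should cycle through a complete set exactly once across the summation range, so the sum over primitive roots reduces to a Gauss-type sum that vanishes except at one specific residue class, which is the desired $r_U$ or $r_V$. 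The exact sign $-\ell^{N-\ord_\ell(c)}$ in the leading residue would then be confirmed by matching against the logarithmic-derivative expansion already used in the proof of \cref{value of hilbert symbol}.

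Once the congruences for $U_k, U'_k, V_k$ are established, the evaluation $H/\ell^{N-\ord_\ell(c)}\equiv \pm 1 \pmod{\ell}$ follows by direct substitution into $H=\sum_k(-U_k-U'_k+\ell V_k)$: the $U'_k$ terms are negligible modulo $\ell^{N-\ord_\ell(c)+1}$, the factor $\ell$ in $\ell V_k$ shifts the $V_k$ contribution to the same $\ell$-adic level as $U_k$, and only the two exceptional contributions at $r_U$ and $r_V$ survive, combining to $\pm 1$ according to whether $\ord_\ell(c)<N$ or $\ord_\ell(c)=N$. The Hilbert symbol formula then follows immediately from the identity $I=2cH/\ell$ recorded in \cref{value of hilbert symbol}, together with $\zeta^{\ell^{N-1}}=\zeta_\ell$.

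The main obstacle is precisely the second step: Weisman's theorem provides only divisibility estimates, not formulas for leading residues, so isolating $r_U, r_V$ and pinning down the exact residue $-\ell^{N-\ord_\ell(c)}$ requires a genuinely finer argument. The most promising route is to compute the primitive-root contribution via the Iwasawa logarithm and Coleman's explicit reciprocity law \cite{coleman1988stable}, then match the resulting expression against the Gauss-sum expansion described above. The numerical tables for $\ell\leq 50$ and $N=2$ at the end of \cref{value of hilbert symbol} support the conjecture and can serve as a sanity check, but a complete proof will likely require a case analysis separating $\ord_\ell(c)<N$ from $\ord_\ell(c)=N$, and possibly further subdivision when $N-\ord_\ell(c)$ is small.
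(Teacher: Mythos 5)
The statement you are trying to prove is presented in the paper as \cref{yosou}, an open \emph{conjecture}: the author offers no proof, only numerical verification for $N=2,\ \ell\leq 50$ and $N=3,4,\ \ell\leq 20$. So there is no proof in the paper to compare against, and your proposal should be judged on its own merits as an attempt to settle an open question.

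Your first step is sound and is the natural one: the roots-of-unity filter $\sum_{k'\equiv v\ (m)}(-1)^{k'}\binom{M}{k'}=\frac{1}{m}\sum_{\omega^m=1}\omega^{-v}(1-\omega)^{M}$ is exactly the mechanism behind Weisman's theorem, which the paper already uses in \cref{fleck}, and stratifying by the order of $\omega$ together with $\ord_\pi(1-\zeta_{\ell^j})=\ell^{N-j}$ does give the coarse divisibility $\ord_\ell(U_k),\ord_\ell(V_k)\geq$ (roughly) $N-\ord_\ell(c)$. But the conjecture asserts strictly more than divisibility: it asserts that \emph{all but exactly one} index $k$ enjoys divisibility by one extra power of $\ell$, and it pins down the leading residue $-\ell^{N-\ord_\ell(c)}$ at the exceptional index. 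Your second step, which is supposed to deliver this, is where the genuine gap lies: the claim that ``the residue of $M_{u,k}$ modulo the relevant cycle length cycles through a complete set exactly once, so the sum over primitive roots reduces to a Gauss-type sum that vanishes except at one residue class'' is an unargued assertion, not a proof. The primitive-root contribution $\frac{1}{m}\sum_{\omega\ \mathrm{prim.}}\omega^{-v_{\bullet,k}}(1-\omega)^{M_{u,k}}$ has both $v_{\bullet,k}$ and $M_{u,k}$ varying with $k$ in a coupled way, and nothing in Weisman's theorem or in the paper's \cref{fleck} identifies which single $k$ produces a unit leading coefficient, let alone that the coefficient is $-1$ times the expected power of $\ell$. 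You acknowledge this yourself in your final paragraph, so the proposal is, by your own account, a plan with its central step missing rather than a proof. (Two smaller cautions: the asserted modulus $\ell^{\ell}$ for $U'_k$ does not by itself suffice for the deduction about $H$ unless $\ell\geq N-\ord_\ell(c)+1$, so any proof must track the actual valuation of $U'_k$ rather than quote $\ell^\ell$; and in the case $\ord_\ell(c)<N$ the $V_k$ contribution to $H$ is annihilated modulo $\ell^{N-\ord_\ell(c)+1}$ by the extra factor $\ell$, so only $r_U$ matters there --- your phrasing ``the two exceptional contributions combine'' conflates the two cases.)
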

The author have verified this conjecture for $N=2,\ \ell\leq 50$, $N=3,\ \ell\leq 20$ and $N=4,\  \ell\leq 20$.

Since $M_{u,k}=\ell^{N-1+u}+2k-2$ when $\ord_{\ell}(c_N')=N$, the following holds:

\begin{propositioneigo}
    For integers $k\geq 0, 0\leq v<\ell^N$,
    $$\sum_{k'\equiv v \pmod{\ell^{N}}} (-1)^{k'} \binom{\ell^{N}+2k-2}{k'}
\equiv \begin{cases}
    v+1 \pmod{\ell} &(k=0)\\
    0 \pmod{\ell} &(k>0)
\end{cases}.$$

    Thus, the above conjecture holds for $\ord_{\ell}(c_N')=N$.
\end{propositioneigo}
\begin{proof}

The case $k>0$ and the congruence for $U_k'$ follows from \cite[Theorem 13]{weisman1977some}. Now we will consider the case $k=0$. We remark $v_{0,0}=v_{1,0}=0$ in this case.

Let
$$v=\sum_{k=0}^{N-1} a_k\cdot \ell^k$$ be the $\ell$-adic expansion of $v$. Then consider the $\ell$-adic expansion 

$$\ell^{N}-2=(\ell-2)+\sum_{k=1}^{N-1} (\ell-1)\cdot \ell^k,$$

and by the Lucas theorem we have 

$$\binom{\ell^{N}-2}{v}
\equiv \binom{\ell-2}{a_0}\prod_{k=1}^{N-1} \binom{\ell-1}{a_k}
\equiv \frac{a_0+1}{\ell-1}\binom{\ell-1}{a_0+1}\cdot (-1)^{\sum_{k=1}^{N-1} a_k}
\equiv (v+1)\cdot (-1)^v$$

where we remark it is hold for $a\in\Z$ that $$\binom{\ell-1}{a}\equiv -\binom{\ell-1}{a-1}\equiv (-1)^a \pmod{\ell}$$
by the Pascal identity.
\end{proof}

Under this conjecture, \Cref{main} can be described as follows since if we write $r^r s^s (\ell^N-t)^t \delta^{r+s} = \epsilon'_N \ell^{b'_N}(1 + c_N')\ (\epsilon'_N \in \mu_{\ell-1}(\Q_{\ell}), b'_N \in \Z, c_N' \in \ell \Z_{\ell})$, then $v_{\ell}=\ord_{\ell}(c_N')$ and $c_N'/\ell^{v_{\ell}}\equiv u' \pmod{\ell}$:
\begin{conjectureeigo}\label{main yosou}
    Let $r, s, t > 0$ be integers such that $r + s + t = \ell^N$ and $\ell \nmid rst$. Suppose $$v_{\ell}=\ord_{\ell}\left\{\left(\frac{r^rs^s(\ell^N-t)^t \delta^{r+s}}{\ell^{\ord_{\ell}(\delta)}}\right)^{\ell-1}-1\right\}$$
    and when $\ord_{\ell}(\delta)=0$,
    $$u'=-\frac{\left(r^rs^s(\ell^N-t)^t \delta^{r+s}\right)^{\ell-1}-1}{\ell^{v_{\ell}}}.$$
Then the global root number of $\phi_{\delta}^{(N)}$ is given by
$$W(\phi_{\delta}^{(N)})=\dprod_{\text{$p\le \infty$}}W_p(\phi_{\delta}^{(N)})$$
where for primes $p \neq \ell$,
$$W_p(\phi_{\delta}^{(N)})
=\begin{dcases}
	i^{-\frac{\ell^{N-1}(\ell-1)}{2}}\ &(p=\infty)\\
	\leg{p}{\ell}\ &(p\mid \delta)\\
	1\ &(p\nmid \delta)
\end{dcases} $$
and,
$$W_{\ell}(\phi_{\delta}^{(N)})=
\begin{dcases}
	-\leg{rst\ord_{\ell}(\delta)(r+s)}{\ell}i^{\frac{\ell^{N-1}(\ell-1)}{2}}\ &(\ord_{\ell}(\delta)\neq 0)\\
	-\leg{(-1)^N rstu'}{\ell}i^{\frac{\ell^{N-1}(\ell-1)}{2}}\ &(\ord_{\ell}(\delta)=0,\ 1\leq v_{\ell}< N)\\
	-\leg{(-1)^{N+1} rstu'}{\ell}i^{\frac{\ell^{N-1}(\ell-1)}{2}}\ &(\ord_{\ell}(\delta)=0,\ v_{\ell}= N)\\
	\leg{2}{\ell}i^{\frac{\ell^{N-1}(\ell-1)}{2}}\ &(\ord_{\ell}(\delta)=0,\ v_{\ell}>N).
\end{dcases}$$
\end{conjectureeigo}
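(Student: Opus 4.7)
The plan is to deduce \cref{main yosou} from \cref{main} by using \cref{yosou} to compute $J\pmod{\ell}$. The archimedean and prime-to-$\ell$ local factors are already stated verbatim in \cref{main}, and the cases $\ord_{\ell}(\delta)\neq 0$ and $\ord_{\ell}(\delta)=0$, $v_\ell>N$ of $W_\ell$ do not depend on $J$, so the content lies in the two middle cases $\ord_{\ell}(\delta)=0$ with $1\le v_\ell\le N$.

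First I would insert the congruences of \cref{yosou} into the expression $J = \frac{2c_N'}{\ell^{N}}\sum_{k}(-U_k - U'_k + \ell V_k)$. Since $U'_k \equiv 0 \pmod{\ell^\ell}$, these terms are negligible modulo $\ell^{N-v_\ell+1}$. The sums $\sum_k U_k$ and $\sum_k V_k$ each receive their only nonzero contribution modulo $\ell^{N-v_\ell+1}$ from the single indices $k=r_U$ and $k=r_V$, of size $-\ell^{N-v_\ell}$ when $v_\ell<N$ and of size $1$ when $v_\ell=N$. A short manipulation then yields
\begin{align}
H := \sum_{k}(-U_k - U'_k + \ell V_k) \equiv \begin{cases} \ell^{N-v_\ell} & (v_\ell<N)\\ -1 & (v_\ell=N)\end{cases} \pmod{\ell^{N-v_\ell+1}},
\end{align}
and hence $J \equiv 2c_N'/\ell^{v_\ell} \pmod{\ell}$ when $v_\ell<N$, while $J \equiv -2c_N'/\ell^N \pmod{\ell}$ when $v_\ell=N$.

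Next I would substitute these residues of $J$ into the formula $W_\ell(\phi_\delta^{(N)}) = -\leg{(-1)^N 2rstJ}{\ell}i^{\ell^{N-1}(\ell-1)/2}$ from \cref{main}. Using $\leg{4}{\ell}=1$ to absorb the factor of $4$ appearing in $2rst\cdot(2c_N'/\ell^{v_\ell})$, the Legendre symbol factors as $\leg{(-1)^N rst}{\ell}\leg{c_N'/\ell^{v_\ell}}{\ell}$ in the case $v_\ell<N$. In the case $v_\ell=N$, the additional minus sign in $J$ contributes an extra $\leg{-1}{\ell}$, which is precisely what shifts $(-1)^N$ to $(-1)^{N+1}$ in the statement of \cref{main yosou}.

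The hard part is to establish the identity $\leg{c_N'/\ell^{v_\ell}}{\ell} = \leg{2}{\ell}$, equivalently that $2c_N'/\ell^{v_\ell}$ is a quadratic residue modulo $\ell$. Using $(1+c_N')^{\ell-1} - 1 \equiv -c_N' \pmod{\ell^{v_\ell+1}}$ together with $(r^rs^s(t-\ell^N)^t\delta^{r+s})^{\ell-1} \equiv (r^rs^s t^t\delta^{r+s})^{\ell-1} \pmod{\ell^{v_\ell+1}}$, the residue of $c_N'/\ell^{v_\ell}$ is expressible as a combination of Fermat quotients $q_\ell(r), q_\ell(s), q_\ell(t), q_\ell(\delta)$ with coefficients $r, s, t, r+s$. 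The required quadratic residue property is therefore a nontrivial combinatorial identity on Fermat quotients subject to the constraint $r+s+t=\ell^N$; establishing it for all admissible tuples is the principal obstacle, and is the reason the statement remains a conjecture even granting \cref{yosou}.
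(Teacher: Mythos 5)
Your overall route is the same as the paper's: the paper's entire justification of \cref{main yosou} is the one sentence preceding it, namely that one substitutes the congruences of \cref{yosou} into \cref{main} after noting $v_\ell=\ord_\ell(c_N')$. Your reduction is carried out correctly as far as it goes: under \cref{yosou} one has $H/\ell^{N-v_\ell}\equiv 1$ (resp. $\equiv -1$) mod $\ell$, hence $J=\tfrac{2c_N'}{\ell^N}H\equiv \tfrac{2c_N'}{\ell^{v_\ell}}$ (resp. $\equiv -\tfrac{2c_N'}{\ell^{N}}$) mod $\ell$, and comparing \cref{main} with \cref{main yosou} then requires exactly $\leg{J}{\ell}=1$, i.e. $\leg{2c_N'/\ell^{v_\ell}}{\ell}=1$. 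You are right that this residual condition is supplied neither by \cref{yosou} nor by anything else in the paper, so the implication asserted by the paper is not actually closed; your proposal is more careful than the paper on this point, but it is also incomplete by your own admission.

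However, the way you propose to close the gap --- proving $\leg{2c_N'/\ell^{v_\ell}}{\ell}=1$ as an identity on Fermat quotients --- cannot succeed, because the identity is false. Take $\ell=3$, $N=2$, $(r,s,t)=(1,1,7)$, $\delta=4$: then $r^rs^s(t-9)^t\delta^{r+s}=-2048\equiv 1\pmod 3$, so $c_N'=-2049=-3\cdot 683$, $v_\ell=1$, and $2c_N'/3=-1366\equiv 2\pmod 3$ is a nonresidue; the same happens for $\delta=5$. Yet the paper's own table for $(r,s)=(1,1)$ agrees with \cref{main yosou} in these cases (and disagrees with \cref{main} once $J\equiv 2c_N'/\ell^{v_\ell}$ is inserted). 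So the obstruction you isolate is not a hard-but-true arithmetic identity; it is evidence of a sign or normalization discrepancy upstream --- e.g. the orientation of the Hilbert symbol (note $(a,b)_{\ell^N}=(b,a)_{\ell^N}^{-1}$, while \cref{value of hilbert symbol} computes $(1+\pi^{f-1},\delta)_{\ell^N}$ and \cref{relative} needs $(r^rs^s(t-\ell^N)^t\delta^{r+s},1+\pi^{f'-1})_{\ell^N}$), or the exponent in the closing display of \cref{yosou}. Until that inconsistency is resolved, neither the paper's one-line argument nor your proposed completion derives \cref{main yosou} from \cref{yosou}, and an attempt to prove the quadratic-residue identity you state would fail at the first numerical example.
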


\bibliographystyle{abbrv}
\bibliography{test}

\end{document}